\newcommand{\vep}{\varepsilon}
\tikzstyle{white}=[circle,draw=black!100,fill=white!100,thick,inner sep=0pt,minimum size =2mm]
\tikzstyle{black}=[circle,draw=black!100,fill=black!100,thick,inner sep=0pt,minimum size =2mm]
\tikzstyle{graywhite}=[circle,draw=gray!100,fill=white!100,thick,inner sep=0pt,minimum size =2mm]
\tikzset{->-/.style={decoration={
  markings,
  mark=at position .5 with {\arrow[scale=1.3]{>}}},
  postaction={decorate}}
  }
\numberwithin{equation}{section}
\theoremstyle{plain}
\newtheorem{theorem}{{\bf Theorem}}[section]
\newtheorem{lemma}[theorem]{{\bf Lemma}}
\newtheorem{corollary}[theorem]{Corollary}
\theoremstyle{definition}
\newtheorem{definition}[theorem]{{\bf Definition}}
\theoremstyle{remark}
\numberwithin{equation}{section}
\begin{document}

\title{Comparing the generalized roundness of metric spaces}

\author{Lukiel Levy-Moore}
\address{Department of Economics, New York University, New York, NY 10012, USA}
\email{LukielLevyMoore@nyu.edu}

\author{Margaret Nichols}
\address{Department of Mathematics, University of Chicago, Chicago, IL 60637, USA}
\email{mnichols@math.uchicago.edu}

\author{Anthony Weston}
\address{Department of Mathematics and Statistics, Canisius College, Buffalo, NY 14208, USA}
\email{westona@canisius.edu}
\address{Department of Decision Sciences, University of South Africa, UNISA 0003, South Africa}
\email{westoar@unisa.ac.za}

\keywords{Generalized roundness, negative type, uniform homeomorphism, scale isomorphism}

\subjclass[2010]{46B07, 46B80, 05C05, 05C12}

\begin{abstract}
Motivated by the local theory of Banach spaces we introduce a notion of finite representability
for metric spaces. This allows us to develop a new technique for comparing the generalized
roundness of metric spaces. We illustrate this technique in two different ways by applying it
to Banach spaces and metric trees. In the realm of Banach spaces we obtain results such as
the following: (1) if $\mathcal{U}$ is any ultrafilter and $X$ is any Banach space, then the
second dual $X^{\ast\ast}$ and the ultrapower $(X)_{\mathcal{U}}$ have the same generalized
roundness as $X$, and (2) no Banach space of positive generalized roundness is uniformly
homeomorphic to $c_{0}$ or $\ell_{p}$, $2 < p < \infty$. Our technique also leads to the
identification of new classes of metric trees of generalized roundness one. In particular,
we give the first examples of metric trees of generalized roundness one that have finite diameter.
These results on metric trees provide a natural sequel to a paper of Caffarelli \textit{et al}.\
\cite{Dcw}. In addition, we show that metric trees of generalized roundness one possess
special Euclidean embedding properties that distinguish them from all other metric trees.
\end{abstract}

\maketitle

\section{Introduction}\label{sec:1}

Direct calculation of the generalized roundness of an infinite metric space is, in general,
a difficult task. In this paper we develop a versatile technique for comparing the generalized
roundness of metric spaces. This leads to substantial new insights into the generalized roundness
of Banach spaces and metric trees.

\begin{definition}\label{def:0}
The \textit{generalized roundness} of a metric space $(X,d)$, denoted by $\wp_{(X,d)}$ or simply $\wp_{X}$,
is the supremum of the set of all $p \geq 0$ that satisfy the following condition: for all integers
$k \geq 2$ and all choices of (not necessarily distinct) points $a_{1}, \ldots, a_{k}, b_{1}, \ldots, b_{k} \in X$,
we have
\begin{eqnarray}\label{ONE}
\sum\limits_{1 \leq i < j \leq k} \left\{ d(a_{i},a_{j})^{p} + d(b_{i},b_{j})^{p} \right\}
& \leq & \sum\limits_{1 \leq i,j \leq k} d(a_{i},b_{j})^{p}.
\end{eqnarray}
The configuration of points $D_{k} = [a_{1}, \ldots, a_{k}; b_{1}, \ldots, b_{k}] \subseteq X$ underlying (\ref{ONE})
will be called a \textit{simplex} in $X$. We will say that $p \geq 0$ is a \textit{generalized roundness exponent}
for $(X,d)$ if (\ref{ONE}) holds for every simplex in $X$.
\end{definition}

The key idea of this paper is to take an indirect approach to the
calculation of $\wp_{(X,d)}$ that is especially well-suited to the analysis of infinite metric spaces.

The notion of generalized roundness was introduced by Enflo \cite{En1} to study universal
uniform embedding spaces. By showing that such spaces must have generalized roundness zero, Enflo was
able to prove that Hilbert spaces are not universal uniform embedding spaces. This resolved a
prominent question of Smirnov. Sometime later, Lennard \textit{et al}.\ \cite{Ltw} exhibited
an important connection between generalized roundness and the classical isometric embedding notion
of negative type. Lafont and Prassidis \cite{Laf} used this connection to show that if a
finitely generated group $\Gamma$ has a Cayley graph of positive generalized roundness, then
$\Gamma$ must satisfy the coarse Baum-Connes conjecture, and hence the strong Novikov conjecture.
The interplay between these notions has a very interesting history. An overview is given
by Prassidis and Weston \cite{Epa}.

The set of generalized roundness exponents of a given metric space $(X, d)$ is always a closed interval of
the form $[0, \wp]$ or $[0, \infty)$, including the possibility that $\wp = 0$ in which case the interval
degenerates to $\{ 0 \}$. This result is a direct consequence of Schoenberg \cite[Theorem 2.7]{Sch}
and Lennard \textit{et al}.\ \cite[Theorem 2.4]{Ltw}. Faver \textit{et al}.\ \cite{Fav} have shown that
the interval $[0, \infty)$ arises if and only if $d$ is an ultrametric. For finite metric
spaces it is always the case that $\wp > 0$. This is the main result in Weston \cite{Wes}.

Enflo \cite{En1} constructed a separable metric space that is not uniformly embeddable in any metric
space of positive generalized roundness. Dranishnikov \textit{et al}.\ \cite{Dra} modified Enflo's
example to construct a locally finite metric space that is not coarsely embeddable in any Hilbert
space, thereby settling a prominent question of Gromov. Kelleher \textit{et al}.\ \cite{Ke1} unified these examples
to construct a locally finite metric space that is not uniformly or coarsely embeddable in any metric
space of positive generalized roundness. One may also use generalized roundness as a highly effective isometric
invariant by exploiting the connection between generalized roundness and negative type due to
Lennard \textit{et al}.\ \cite{Ltw}. A general principle for using
generalized roundness as an isometric invariant was recently isolated by Kelleher \textit{et al}.\
\cite[Theorem 3.24]{Ke2}. It is therefore a matter of great utility to be able to calculate the
generalized roundness of certain metric spaces.

In recent work, S\'{a}nchez \cite{San} has provided a method of calculating, at least numerically,
the generalized roundness of a given finite metric space $(X,d)$. However, as the size of the space
grows, S\'{a}nchez' method rapidly becomes computationally intensive. Nevertheless, the method is
an important tool for the analysis of the generalized roundness of finite metric spaces. In \cite{San},
the method is used to calculate the generalized roundness of certain finite graphs endowed with the
usual combinatorial metric. The metric graphs that we consider in this paper are countable metric
trees and so we are unable to use S\'{a}nchez' method.

It is prudent at this point to pin down some basic definitions pertaining to metric graphs.
A graph $G$ is \textit{connected} if there is a (finite) path between
any two vertices of $G$. A \textit{tree} is an undirected, connected, locally finite
graph without cycles. These definitions imply that the vertex and edge sets of a
tree are at most countable. Assigning a positive length to each edge of a given tree
$T$ induces a shortest path metric $d$ on the vertices of the tree. The resulting metric
space is denoted $(T, d)$ and is called a \textit{metric tree}.

Generalized roundness properties of metric trees have been studied by several authors.
All additive metric spaces, and hence all metric trees, have generalized roundness at least one.
This fact is folklore and it may be derived in several different ways. One such proof
appears in Kelly \cite[Theorem II]{Jbk}. Another proof follows from
Faver \textit{et al}.\ \cite[Proposition 4.1]{Fav}. Examples of Caffarelli \textit{et al}.\ \cite{Dcw}
show that some countable metric trees have generalized roundness exactly one. The situation is
different for finite metric trees. Indeed, Hjorth \textit{et al.} \cite{Hj1} have shown that
all finite metric trees have strict $1$-negative type. This condition ensures that all
finite metric trees have generalized roundness greater than one. (One way to see this is to appeal to
Lennard \textit{et al}.\ \cite[Theorem 2.4]{Ltw} and Li and Weston \cite[Corollary 4.2]{Hli}.)
Hence metric trees of generalized roundness one are necessarily countable. Simple
examples show that the converse of this statement is not true in general.

We conclude this introduction with some comments about the structure and main results
of this paper. In Section \ref{sec:2}, motivated by the local theory of Banach spaces, we
introduce a notion of finite representability for metric spaces. Our purpose in introducing
such a notion is to provide a new technique for comparing the generalized roundness of
metric spaces. The remainder of Section \ref{sec:2} is then devoted to a preliminary investigation
of this technique in the context of infinite-dimensional Banach spaces. We prove, for example,
that if $\mathcal{U}$ is any ultrafilter and $X$ is any Banach space, then the second dual
$X^{\ast\ast}$ and the ultrapower $(X)_{\mathcal{U}}$ have the same generalized roundness as $X$.
In other words, $\wp_{X} = \wp_{X^{\ast\ast}} = \wp_{(X)_{\mathcal{U}}}$. It is also noted
that no Banach space of positive generalized roundness is uniformly homeomorphic to $c_{0}$
or $\ell_{p}$, $2 < p < \infty$.

Caffarelli \textit{et al}.\ \cite{Dcw} identified several classes of metric
trees of generalized roundness one. The types of trees studied in \cite{Dcw} were spherically
symmetric, infinitely bifurcating or comb-like trees endowed with the usual combinatorial path metric.
In other words, all edges in the trees were assumed to have length one and all other distances
were determined geodesically. In Sections \ref{sec:3} and \ref{sec:4} we relax this condition
by considering trees endowed with weighted path metrics. Section \ref{sec:3} focusses on trees
that resemble jagged combs. Section \ref{sec:4} deals with spherically symmetric
trees that have systematically weighted edges. We also make a distinction between
\textit{convergent} and \textit{divergent} spherically symmetric trees. In both cases we show that
the generalized roundness of such trees can easily be one. In particular, we identify a large
class of metric trees of generalized roundness one that have finite diameter.

In Section \ref{sec:5} we examine isometric embedding properties of metric trees of generalized
roundness one. We prove that all metric trees of generalized roundness one possess the stronger
property of strict $1$-negative type. Due to the relationship between generalized roundness and
negative type, it also follows that no metric tree of generalized roundness one has $p$-negative
type for any $p > 1$. Taken together, these facts imply the following embedding phase transition:
If $(T, d)$ is a metric tree of generalized roundness one,
then (1) the metric transform $(T, \sqrt{d})$ is isometric to an affinely independent subset of $\ell_{2}$,
and (2) the metric transform $(T, \sqrt{d^{p}})$ does not embed isometrically into $\ell_{2}$ for any $p$,
$1 < p \leq 2$. Moreover, the only metric trees that satisfy condition (2) are those of generalized
roundness one.

\section{Comparing the generalized roundness of metric and Banach spaces}\label{sec:2}

In this section we develop a technique for comparing the generalized roundness of metric spaces.
In order to do this we introduce a metric space version of the Banach space notion of finite representability.
This important notion in the local theory of Banach spaces was introduced by James \cite{Ja1,Ja2}.
Throughout this section, all Banach spaces are assumed to be real and infinite-dimensional unless noted otherwise.
The first and second duals of a Banach space $X$ are denoted by $X^{\ast}$ and $X^{\ast\ast}$, respectively.
All $L_{p}$-spaces are assumed to be commutative unless noted otherwise.

\begin{definition}\label{def:0.5}
Let $X$ and $X^{\prime}$ be Banach spaces.
\begin{enumerate}
\item[(1)] $X$ is \textit{crudely represented} in $X^{\prime}$ if there exists an
$\vep_{0} > 0$ such that for each finite-dimensional subspace $E \subset X$ there exists
a finite-dimensional subspace $F \subset X^{\prime}$ (with $\dim E = \dim F$) and a
one-to-one linear mapping $T : E \rightarrow F$ that satisfies $\| T \| \| T^{-1} \| \leq 1 + \vep_{0}$.

\item[(2)] $X$ is \textit{finitely represented} in $X^{\prime}$ if for each $\vep > 0$
and each finite-dimensional subspace $E \subset X$ there exists a finite-dimensional subspace $F \subset X^{\prime}$
(with $\dim E = \dim F$) and a one-to-one linear mapping $T : E \rightarrow F$ that satisfies
\begin{align}\label{fin:rep}
(1 - \vep) \| x \| \leq \| Tx \| \leq (1 + \vep) \| x \|
\end{align}
for all $x \in E$.
\end{enumerate}
\end{definition}

It is easy to see that an equivalent reformulation of the condition given in Definition \ref{def:0.5} is the following:
for each $\vep > 0$ and each finite-dimensional subspace $E \subset X$ there exists a finite-dimensional
subspace $F \subset X^{\prime}$ (with $\dim E = \dim F$) and a one-to-one linear mapping
$T : E \rightarrow F$ that satisfies $\| T \| \| T^{-1} \| \leq 1 + \vep$. While this reformulation makes the
the relationship between crude and finite representability plain, the metric nature of Definition \ref{def:0.5} (2)
suits our purposes, not least because it motivates Definitions \ref{def:1} and \ref{def:2} below.

The notion of crude representability is particularly important in the uniform theory of Banach spaces.
Recall that two Banach spaces $X$ and $X^{\prime}$ are said to be \textit{uniformly homeomorphic}
if there exists a bijection $f: X \rightarrow X^{\prime}$ such that $f$ and $f^{-1}$ are both
uniformly continuous. A famous result of Ribe \cite{Rib} asserts that if a Banach space $X$ is
uniformly homeomorphic to a Banach space $X^{\prime}$, then $X$ is crudely represented in $X^{\prime}$
and $X^{\prime}$ is crudely represented in $X$. This result is sometimes known as Ribe's
rigidity theorem.

A one-to-one linear mapping $T : E \rightarrow F$ that satisfies condition (\ref{fin:rep}) is said to be a
\textit{$(1 + \vep)$-isomorphism}. A similar notion for metric spaces may be formulated as follows.

\begin{definition}\label{def:1}
Let $(X,d)$ and $(X^{\prime}, \rho)$ be metric spaces, and suppose that $\vep > 0$. A one-to-one mapping
$\phi : X \rightarrow X^{\prime}: x \mapsto x^{\prime}$ is called a \textit{$(1 + \vep)$-scale isomorphism}
if there exists a constant $n = n(\vep) > 0$ such that
\[
(1-\vep)n d(a,b) \leq \rho(a^{\prime},b^{\prime}) \leq (1 + \vep)n d(a,b)
\]
for all $a,b \in X$.
\end{definition}

It is worth noting that we will use the notation $x^{\prime}$ to denote $\phi(x)$ throughout this section.

\begin{definition}\label{def:2}
A metric space $(X,d)$ is said to be \textit{locally represented} in a metric space $(X^{\prime}, \rho)$
if for each $\vep > 0$ and each non-empty finite set $X^{\sharp} \subseteq X$ there exists a $(1 + \vep)$-scale
isomorphism $\phi : (X^{\sharp}, d) \rightarrow (X^{\prime}, \rho)$.
\end{definition}

The following lemma notes that for Banach spaces, finite representation implies local representation.

\begin{lemma}\label{lem:0}
Let $X$ and $X^{\prime}$ be given Banach spaces.
If $X$ is finitely represented in $X^{\prime}$, then $X$ is locally represented in $X^{\prime}$.
\end{lemma}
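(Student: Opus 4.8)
The plan is to convert the pointwise data of the finite set into a linear subspace, invoke finite representability there, and then restrict the resulting $(1+\vep)$-isomorphism. Since $X$ and $X^{\prime}$ carry their norm-induced metrics $d(a,b) = \|a-b\|$ and $\rho(a^{\prime},b^{\prime}) = \|a^{\prime}-b^{\prime}\|$, this restriction will turn out to be a $(1+\vep)$-scale isomorphism with scaling constant $n = 1$.

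More precisely, fix $\vep > 0$ and a non-empty finite set $X^{\sharp} = \{ x_{1}, \ldots, x_{m} \} \subseteq X$. It suffices to treat the case $\vep < 1$: a $(1+\vep^{\prime})$-scale isomorphism with $\vep^{\prime} \leq \vep$ is automatically a $(1+\vep)$-scale isomorphism (with the same constant $n$), so establishing the property for all small $\vep$ establishes it for every $\vep > 0$. First I would set $E = \operatorname{span}(X^{\sharp})$, a finite-dimensional subspace of $X$. Because $X$ is finitely represented in $X^{\prime}$, there exist a finite-dimensional subspace $F \subseteq X^{\prime}$ and a one-to-one linear map $T : E \rightarrow F$ with $(1-\vep)\|x\| \leq \|Tx\| \leq (1+\vep)\|x\|$ for all $x \in E$. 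I then take $\phi$ to be the restriction of $T$ to $X^{\sharp}$, so that $x^{\prime} = \phi(x) = Tx$.

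The verification is routine. For any $a, b \in X^{\sharp}$ the difference $a - b$ again lies in $E$, so linearity gives $\rho(a^{\prime}, b^{\prime}) = \|T(a-b)\|$, and the estimate on $E$ yields $(1-\vep)\,d(a,b) \leq \rho(a^{\prime}, b^{\prime}) \leq (1+\vep)\,d(a,b)$. This is precisely the defining inequality of a $(1+\vep)$-scale isomorphism with $n = 1$. Injectivity of $\phi$ on $X^{\sharp}$ follows from the lower bound, since $a \neq b$ forces $d(a,b) > 0$ and hence $\rho(a^{\prime},b^{\prime}) \geq (1-\vep)\,d(a,b) > 0$ as $\vep < 1$. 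Since $\vep$ and $X^{\sharp}$ were arbitrary, $X$ is locally represented in $X^{\prime}$.

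There is no serious obstacle here; the single point meriting care is the passage to $E = \operatorname{span}(X^{\sharp})$, which is exactly what turns the finite configuration into a linear object on which finite representability can be applied. The fact that differences of points of $X^{\sharp}$ stay inside $E$ is what allows the single linear estimate to control all pairwise distances simultaneously, while the reduction to $\vep < 1$ is what guarantees injectivity.
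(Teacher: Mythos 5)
Your proposal is correct and follows essentially the same route as the paper: pass to $E = \operatorname{span}(X^{\sharp})$, apply finite representability to obtain a $(1+\vep)$-isomorphism $T$, and restrict $T$ to $X^{\sharp}$ to get a $(1+\vep)$-scale isomorphism with $n=1$. Your write-up merely spells out the verification (differences staying in $E$, injectivity via the reduction to $\vep<1$) that the paper leaves implicit.
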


\begin{proof}
Let $X^{\sharp}$ be a given non-empty finite subset of $X$ and suppose that $\vep > 0$.
Let $E$ denote the linear span of $X^{\sharp}$ in $X$. Then $E$ is a finite-dimensional subspace of $X$.
As $X$ is finitely represented in $X^{\prime}$, there exists a $(1 + \vep)$-isomorphism
$T : E \rightarrow X^{\prime}$. Setting $\phi$ to be the restriction of $T$ to $X^{\sharp}$
we obtain a $(1 + \vep)$-scale isomorphism $X^{\sharp} \rightarrow X^{\prime}$ (with constant $n = 1$).
Hence then $X$ is locally represented in $X^{\prime}$.
\end{proof}

We turn now to the main technical result of this section. It provides a new technique for comparing
the generalized roundness of metric spaces.

\begin{theorem}\label{thm:1}
If a metric space $(X,d)$ is locally represented in a metric space $(X^{\prime}, \rho)$, then
every generalized roundness exponent of $(X^{\prime}, \rho)$ is a generalized roundness
exponent of $(X,d)$. Hence, $\wp_{X^{\prime}} \leq \wp_{X}$.
\end{theorem}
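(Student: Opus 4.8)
The plan is to show that any generalized roundness exponent $p \ge 0$ of $(X',\rho)$ is also a generalized roundness exponent of $(X,d)$. The stated inequality $\wp_{X'} \le \wp_X$ will then be immediate: the generalized roundness exponents of each space form an interval of the form $[0,\wp]$ or $[0,\infty)$, so once we know the exponent set of $X'$ is contained in that of $X$, taking suprema gives $\wp_{X'} \le \wp_X$.

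First I would fix such a $p$ together with an arbitrary simplex $D_k = [a_1,\ldots,a_k;b_1,\ldots,b_k]$ in $X$, the goal being to verify (\ref{ONE}) for $D_k$. Let $X^\sharp = \{a_1,\ldots,a_k,b_1,\ldots,b_k\}$, a non-empty finite subset of $X$. Since $(X,d)$ is locally represented in $(X',\rho)$, for each $\vep \in (0,1)$ there is a $(1+\vep)$-scale isomorphism $\phi : (X^\sharp,d) \to (X',\rho)$ with an associated constant $n = n(\vep) > 0$. Writing $a_i' = \phi(a_i)$ and $b_j' = \phi(b_j)$, the image points $[a_1',\ldots,a_k';b_1',\ldots,b_k']$ constitute a simplex in $X'$, so by hypothesis they satisfy (\ref{ONE}) in $(X',\rho)$.

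The heart of the argument is to push this inequality back to $X$ using the two scale bounds, together with the fact that $t \mapsto t^p$ is nondecreasing on $[0,\infty)$ for $p \ge 0$. On the left I would bound $\rho(a_i',a_j')^p \ge \bigl((1-\vep)n\bigr)^p d(a_i,a_j)^p$ (and similarly for the $b$-terms), so the left-hand side in $X'$ is at least $\bigl((1-\vep)n\bigr)^p$ times the left-hand side of (\ref{ONE}) in $X$. On the right I would bound $\rho(a_i',b_j')^p \le \bigl((1+\vep)n\bigr)^p d(a_i,b_j)^p$, so the right-hand side in $X'$ is at most $\bigl((1+\vep)n\bigr)^p$ times the right-hand side of (\ref{ONE}) in $X$. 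Chaining these through the inequality valid in $X'$ and cancelling the common positive factor $n^p$ (this cancellation is exactly why a \emph{scale} isomorphism, rather than an isometry, is enough) yields
\[
\sum_{1 \le i < j \le k} \bigl\{ d(a_i,a_j)^p + d(b_i,b_j)^p \bigr\} \le \left( \frac{1+\vep}{1-\vep} \right)^{p} \sum_{1 \le i,j \le k} d(a_i,b_j)^p .
\]
Because both sums are determined by the fixed simplex $D_k$ and are independent of $\vep$, letting $\vep \to 0^+$ makes the distortion factor tend to $1$ and delivers precisely (\ref{ONE}) for $D_k$ in $(X,d)$.

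The step I expect to require the most care is purely bookkeeping: getting the directions of the two estimates right, so that the left-hand side is bounded below and the right-hand side above, and noting that the scaling constant $n(\vep)$ appears with the same exponent $p$ on both sides and therefore cancels. After that cancellation the only residual error is $\bigl((1+\vep)/(1-\vep)\bigr)^p$, which is harmless in the limit, so I do not anticipate any genuine difficulty. The degenerate situations cause no trouble: $p = 0$ is a generalized roundness exponent of every metric space, and coincident points among the $a_i,b_j$ force the matching $\rho$-distances to vanish on both sides, keeping all estimates consistent.
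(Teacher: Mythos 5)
Your proof is correct and rests on exactly the same estimates as the paper's: bound the left-hand sums below by $\bigl((1-\vep)n\bigr)^{p}$ times their preimage sums, bound the right-hand sum above by $\bigl((1+\vep)n\bigr)^{p}$ times its preimage sum, and cancel the common factor $n^{p}$. The only difference is organizational --- the paper argues by contraposition, fixing a single $\vep$ in advance so that a strictly violated inequality in $(X,d)$ survives the distortion and transfers to $(X^{\prime},\rho)$, whereas you argue directly and let $\vep \to 0^{+}$; both are valid and neither is materially simpler.
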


\begin{proof}
It suffices to prove that if $p$ is not a generalized roundness
exponent of $(X, d)$ then $p$ is not a generalized roundness of $(X^{\prime}, \rho)$.

Suppose that $p \geq 0$ is not a generalized roundness exponent of $(X, d)$. We immediately have
that $p > 0$ because $0$ is a generalized roundness exponent of all metric spaces. From our definition there must
be a simplex $[a_{i};b_{j}] \subseteq X$ such that
\begin{eqnarray*}
\sum\limits_{ i < j} \left( d(a_{i}, a_{j})^{p} + d(b_{i}, b_{j})^{p} \right)
& > & \sum\limits_{i,j} d(a_{i}, b_{j})^{p}.
\end{eqnarray*}
The limiting behavior $y = x^{p}$ in a neighborhood of $x = 1$ then ensures that
we may choose an $\vep > 0$ so that
\begin{eqnarray}\label{ineq:1}
(1 - \vep)^{p} \cdot\sum\limits_{ i < j} \left( d(a_{i}, a_{j})^{p} + d(b_{i}, b_{j})^{p} \right)
& > & (1 + \vep)^{p} \cdot \sum\limits_{i,j} d(a_{i}, b_{j})^{p}.
\end{eqnarray}

We now let $X^{\sharp}$ denote the finite subset of $X$ that consists of the simplex points $a_{i}, b_{j}$.
As $(X,d)$ is locally represented in $(X^{\prime}, \rho)$ and $\vep > 0$, there must exist an
injection $\phi: X^{\sharp} \rightarrow X^{\prime}: x \mapsto x^{\prime}$ and a constant $n = n(\vep) > 0$ such that
\begin{align*}
(1 - \vep) n d(a,b) \leq \rho(a^{\prime},b^{\prime}) \leq (1 + \vep)n d(a,b)
\end{align*}
for all $a,b \in X^{\sharp}$. Furthermore, if we scale the metric on $X^{\prime}$ by defining
$\omega = \rho / n$, we immediately obtain
\begin{align}\label{ineq:2}
(1 - \vep) d(a,b) \leq \omega(a^{\prime},b^{\prime}) \leq (1 + \vep) d(a,b)
\end{align}
for all $a,b \in X^{\sharp}$. It now follows from (\ref{ineq:1}) and (\ref{ineq:2}) that $p$
is not a generalized roundness exponent for the scaled metric space $(X^{\prime}, \omega)$. Indeed,
\begin{eqnarray*}
\sum\limits_{ i < j} \left( \omega(a_{i}^{\prime}, a_{j}^{\prime})^{p} + \omega(b_{i}^{\prime}, b_{j}^{\prime})^{p} \right)
& \geq & (1 - \vep)^{p} \cdot\sum\limits_{ i < j} \left( d(a_{i}, a_{j})^{p} + d(b_{i}, b_{j})^{p} \right) \\
&   >  & (1 + \vep)^{p} \cdot \sum\limits_{i,j} d(a_{i}, b_{j})^{p} \\
&   =  & \sum\limits_{i,j} \left( (1 + \vep) d(a_{i}, b_{j}) \right)^{p} \\
& \geq & \sum\limits_{i,j} \omega(a_{i}^{\prime}, b_{j}^{\prime})^{p}.
\end{eqnarray*}
This completes the proof because generalized roundness is preserved under any scaling of the metric $\rho$.
\end{proof}

For the remainder of this section we will consider the application of Theorem \ref{thm:1} to Banach spaces.
It is germane to recall a few facts about the generalized roundness of $L_{p}$-spaces. If $X$ is an $L_{p}$-space,
then $\wp_{X} = p$ if $1 \leq p \leq 2$ and $\wp_{X} = 0$ if $p > 2$. These results are due to Enflo \cite{En1}
in the case $1 \leq p \leq 2$ and Lennard \textit{et al}.\ \cite{Ltw} in the case $p > 2$. With the exception
of the Schatten $p$-classes $C_{p}$, the generalized roundness of non-commutative $L_{p}$-spaces has not been
widely studied. In \cite{Ltw} the authors noted that $\wp_{C_{p}} = 0$ if $p > 2$. It is only relatively recently
that Dahma and Lennard \cite{Dah} have shown that $\wp_{C_{p}} = 0$ if $0 < p < 2$.


\begin{corollary}\label{cor:1}
If a Banach space $X$ is finitely represented in a Banach space $X^{\prime}$, then $\wp_{X^{\prime}} \leq \wp_{X}$.
In particular, if $\wp_{X} = 0$, then $\wp_{X^{\prime}} = 0$.
\end{corollary}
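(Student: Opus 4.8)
The plan is to combine the two results already established in this section, namely Lemma \ref{lem:0} and Theorem \ref{thm:1}, after regarding each Banach space as a metric space under the metric induced by its norm. First I would note that the hypothesis of the corollary --- that $X$ is finitely represented in $X^{\prime}$ --- is exactly the hypothesis of Lemma \ref{lem:0}. Applying that lemma yields immediately that $X$, viewed as the metric space $(X, d)$ with $d(x,y) = \| x - y \|$, is locally represented in $X^{\prime}$, viewed as the metric space $(X^{\prime}, \rho)$ with $\rho(u,v) = \| u - v \|^{\prime}$.

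With local representation in hand, I would then invoke Theorem \ref{thm:1} applied to $(X, d)$ and $(X^{\prime}, \rho)$. The theorem gives directly that every generalized roundness exponent of $(X^{\prime}, \rho)$ is a generalized roundness exponent of $(X, d)$, and in particular the stated inequality $\wp_{X^{\prime}} \leq \wp_{X}$. This settles the first assertion of the corollary.

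For the second assertion, I would appeal to the elementary fact, recorded in the introduction, that the generalized roundness of any metric space is a nonnegative real number (or $+\infty$), so that $\wp_{X^{\prime}} \geq 0$ always holds. Hence if $\wp_{X} = 0$, the chain $0 \leq \wp_{X^{\prime}} \leq \wp_{X} = 0$ forces $\wp_{X^{\prime}} = 0$.

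I expect essentially no obstacle: the substantive work has been front-loaded into Lemma \ref{lem:0} and Theorem \ref{thm:1}, so the corollary is a two-step deduction. The only point meriting even minimal attention is the bridge between the Banach-space notion of finite representability and the metric-space notion of local representation, and this is precisely the content of Lemma \ref{lem:0}; once it is crossed, the result follows by a single application of Theorem \ref{thm:1} together with the nonnegativity of generalized roundness.
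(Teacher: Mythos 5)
Your proposal is correct and is exactly the paper's argument: the paper's proof is the one-line remark ``Immediate from Lemma \ref{lem:0} and Theorem \ref{thm:1},'' and you have simply spelled out that two-step deduction together with the trivial nonnegativity observation for the final claim. No differences to report.
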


\begin{proof}
Immediate from Lemma \ref{lem:0} and Theorem \ref{thm:1}.
\end{proof}

Examples of Banach spaces that have generalized roundness zero include $C[0,1]$,
$\ell_{\infty}$, $c_{0}$, $\ell_{p}$ if $p > 2$, and the Schatten $p$-class $C_{p}$ if $p \not= 2$.
For each Banach space $X$ and each integer $n \geq 2$,
Dineen \cite{Din} has shown that $\ell_{\infty}$ is finitely represented in the space $\mathcal{P}(\prescript{n}{}X)$
of bounded $n$-homogenous polynomials on $X$.
Hence $\mathcal{P}(\prescript{n}{}X)$ has generalized roundness zero by Corollary \ref{cor:1}.
For each $p \in (1, \infty)$, $c_{0}$ is finitely represented in the quasi-reflexive James space $J_{p}$.
(This result is due to Giesy and James \cite{Gie} in the case $p = 2$ and, for $p \not= 2$, it is due to
Bird \textit{et al}.\ \cite{Bir}.) Hence, for each $p \in (1, \infty)$, $J_{p}$ has generalized roundness zero
by Corollary \ref{cor:1}.

On the basis of existing theory and Corollary \ref{cor:1} we are able to isolate some situations
where generalized roundness functions as an invariant in the uniform theory of Banach spaces. For
instance, as the next corollary shows, no Banach space of positive generalized roundness is uniformly
homeomorphic to $\ell_{p}$ for any $p > 2$.

\begin{corollary}\label{cor:1.5}
If a Banach space $X$ is uniformly homeomorphic to $\ell_{p}$ ($1 \leq p < \infty$),
then $\wp_{X} \leq \wp_{\ell_{p}}$. In particular, if $p > 2$, then $\wp_{X} = 0$.
\end{corollary}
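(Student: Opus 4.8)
The plan is to invoke Ribe's rigidity theorem together with Corollary \ref{cor:1}. The statement to prove is that if a Banach space $X$ is uniformly homeomorphic to $\ell_p$ (for $1 \leq p < \infty$), then $\wp_X \leq \wp_{\ell_p}$, and in particular $\wp_X = 0$ whenever $p > 2$. The key observation is that uniform homeomorphism gives us crude representability in both directions, via Ribe's theorem stated earlier in this section.

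First I would apply Ribe's rigidity theorem to the hypothesis: since $X$ is uniformly homeomorphic to $\ell_p$, the theorem yields that $X$ is crudely represented in $\ell_p$ and that $\ell_p$ is crudely represented in $X$. The direction I want is the latter, namely that $\ell_p$ is crudely represented in $X$, because Corollary \ref{cor:1} relates the generalized roundness of a space to that of a space it embeds into. I should pause here, though, because there is a genuine gap to address: Corollary \ref{cor:1} is stated for \emph{finite} representability, whereas Ribe's theorem only provides \emph{crude} representability. The step that must be handled carefully is bridging this gap.

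The main obstacle, then, is reconciling crude and finite representability. For $\ell_p$ specifically, the resolution is that $\ell_p$ has a strong homogeneity property: any finite-dimensional subspace of $\ell_p$ is, up to an arbitrarily small distortion, isometric to $\ell_p^n$ for the appropriate $n$, and these spaces sit inside any space in which $\ell_p$ is crudely represented with uniformly bounded distortion. The standard local-theory fact is that crude finite representability of $\ell_p$ in $X$ actually upgrades to finite representability of $\ell_p$ in $X$; this follows from a self-improvement argument exploiting that $\ell_p^n \oplus_p \ell_p^n \cong \ell_p^{2n}$ isometrically, so one can iterate and let the distortion wash out. Thus $\ell_p$ is in fact finitely represented in $X$. (Alternatively, one may cite that the $\ell_p$ are among the spaces for which crude and finite representability coincide, a phenomenon underlying the theory of spreading and $\ell_p$-type subspaces.)

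Once finite representability of $\ell_p$ in $X$ is established, the conclusion follows immediately: by Corollary \ref{cor:1} (with the roles $X^{\prime} = X$ and the finitely represented space being $\ell_p$), we obtain $\wp_X \leq \wp_{\ell_p}$. For the second assertion, I would recall from the discussion preceding the corollaries that $\wp_{\ell_p} = 0$ when $p > 2$, since $\ell_p$ is an $L_p$-space and Lennard \textit{et al.}\ established that $L_p$-spaces have generalized roundness zero for $p > 2$. Substituting this into the inequality $\wp_X \leq \wp_{\ell_p} = 0$ forces $\wp_X = 0$, which completes the argument. The whole proof is therefore short modulo the crude-to-finite upgrade, which is where I expect the real content to lie and which deserves an explicit citation or a one-line justification in the final write-up.
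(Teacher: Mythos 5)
Your proposal is correct and follows essentially the same route as the paper: Ribe's rigidity theorem gives that $\ell_{p}$ is crudely represented in $X$, the crude-to-finite upgrade for $\ell_{p}$ closes the gap you correctly identify, and Corollary \ref{cor:1} together with $\wp_{\ell_{p}} = 0$ for $p > 2$ finishes the argument. The citation you flag as missing is exactly what the paper supplies at that point, attributing the upgrade to Krivine's theorem (as noted by Rosenthal and Lemberg) for $1 < p < \infty$ and to James for $p = 1$.
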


\begin{proof}
Ribe's rigidity theorem \cite{Rib} implies that $\ell_{p}$ is crudely represented in $X$.
However, if $\ell_{p}$ is crudely represented in $X$, then $\ell_{p}$
is finitely represented in $X$. This follows from Krivine's theorem \cite{Kri}
(as noted by Rosenthal \cite{Ros} and Lemberg \cite{Lem}) if $1 < p < \infty$,
and it is due to James \cite{Ja0} in the case $p = 1$.
Thus $\wp_{X} \leq \wp_{\ell_{p}}$ by Corollary \ref{cor:1}. Moreover, if $p > 2$,
then $\wp_{\ell_{p}} = 0$. So for $p > 2$ we deduce that $\wp_{X} = 0$.
\end{proof}

The uniform structure of $\ell_{p}$, $1 < p < \infty$, is particularly well-understood. For instance,
if a Banach space $X$ is uniformly homeomorphic to $\ell_{p}$, $1 < p < \infty$, then it is linearly
isomorphic to $\ell_{p}$. This deep theorem is due to Enflo \cite[Theorem 6.3.1]{En2} in the case
$p = 2$ and Johnson \textit{et al}.\ \cite[Theorem 2.1]{Joh} when $p \not= 2$. So if $1 < p < \infty$,
one may replace the phrase ``uniformly homeomorphic'' in the statement of Corollary \ref{cor:1.5} with
the phrase ``linearly isomorphic'' without losing any generality. The situation for $c_{0}$ is somewhat similar.

\begin{corollary}
If a Banach space $X$ is uniformly homeomorphic to $c_{0}$, then $\wp_{X} = 0$.
In particular, no Banach space of positive generalized roundness is uniformly
homeomorphic to $c_{0}$.
\end{corollary}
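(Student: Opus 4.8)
The plan is to run the same argument that established Corollary \ref{cor:1.5}, with $c_0$ in place of $\ell_p$. The decisive chain is: crude representability of $c_0$ in $X$ implies finite representability of $c_0$ in $X$, which in turn implies $\wp_X \leq \wp_{c_0} = 0$. I would organize it as three short steps.

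First, since $X$ is uniformly homeomorphic to $c_0$, Ribe's rigidity theorem \cite{Rib} immediately yields that $c_0$ is crudely represented in $X$. Second --- the only step that goes beyond what was needed for Corollary \ref{cor:1.5} --- I would upgrade crude representability to finite representability. For $c_0$ this improvement is automatic, by the $c_0$-distortion theorem of James \cite{Ja0}; this is the exact analogue of the role Krivine's theorem \cite{Kri} played for $\ell_p$ with $1 < p < \infty$ and James's $\ell_1$ result played for $p = 1$. Concretely, crude representability of $c_0$ in $X$ provides a single $\lambda \geq 1$ such that every $\ell_{\infty}^{n}$ embeds into $X$ with Banach--Mazur distortion at most $\lambda$; the distortion theorem then forces each $\ell_{\infty}^{n}$ to embed with distortion $1 + \vep$ for every $\vep > 0$, which is precisely the statement that $c_0$ is finitely represented in $X$.

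Third, with $c_0$ finitely represented in $X$, Corollary \ref{cor:1} gives $\wp_X \leq \wp_{c_0}$. Since $\wp_{c_0} = 0$ and generalized roundness is never negative, this forces $\wp_X = 0$. The ``in particular'' assertion is then just the contrapositive: a Banach space with $\wp_X > 0$ cannot be uniformly homeomorphic to $c_0$.

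The main obstacle is the second step. Everything else is a direct appeal to results already in hand, but the passage from crude to finite representability is exactly where the special finite-dimensional structure of $c_0$ must be used; I would take care that the distortion theorem is invoked in the form that upgrades the uniform constant $\lambda$ to an arbitrarily small distortion, rather than merely asserting that an isomorphic copy of $c_0$ can be improved to an almost-isometric one.
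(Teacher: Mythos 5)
Your proposal is correct and follows the paper's own proof essentially verbatim: Ribe's rigidity theorem gives crude representability of $c_{0}$ in $X$, James's theorem upgrades this to finite representability, and Corollary \ref{cor:1} then yields $\wp_{X} \leq \wp_{c_{0}} = 0$. Your added commentary on how the distortion theorem effects the upgrade from a uniform constant to arbitrarily small distortion is a helpful elaboration of a step the paper leaves to the citation, but the argument is the same.
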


\begin{proof}
Ribe's rigidity theorem \cite{Rib} implies that $c_{0}$ is crudely represented in $X$.
However, James \cite{Ja0} has shown that if $c_{0}$ is crudely represented in $X$,
then $c_{0}$ is finitely represented in $X$. Thus $\wp_{X} \leq \wp_{c_{0}}$ by Corollary \ref{cor:1}.
Moreover, as $\wp_{c_{0}} = 0$, we further deduce that $\wp_{X} = 0$.
\end{proof}

The uniform structure of $c_{0}$ is more beguiling and less well understood than that of $\ell_{p}$,
$1 < p < \infty$. Johnson \textit{et al}.\ \cite[Corollary 3.2]{Joh} proved that if a complemented
subspace of a $C(K)$ space is uniformly homeomorphic to $c_{0}$, then it is linearly isomorphic to $c_{0}$.
Godefroy \textit{et al}.\ \cite[Theorem 5.6]{God} have shown that a Banach space which is uniformly
homeomorphic to $c_{0}$ is an isomorphic predual of $\ell_{1}$ with summable Szlenk index. But it is not known
whether a predual of $\ell_{1}$ with summable Szlenk index is linearly isomorphic to $c_{0}$. Thus, unlike $\ell_{p}$
($1 < p < \infty$), it remains unclear whether $c_{0}$ is determined by its uniform structure.

In the proof of the following corollary we invoke the Principle of Local Reflexivity. This principle
is central to the local theory of Banach spaces and it is originally due to Lindenstrauss and Rosenthal \cite{Lin}.

\begin{corollary}\label{cor:2}
If $X$ is a Banach space, then $\wp_{X} = \wp_{X^{\ast\ast}}$ and $\wp_{X^{\ast}} = \wp_{X^{\ast\ast\ast}}$.
\end{corollary}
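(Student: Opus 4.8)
The plan is to establish the single equality $\wp_{X} = \wp_{X^{\ast\ast}}$ for an arbitrary Banach space $X$, and then to obtain $\wp_{X^{\ast}} = \wp_{X^{\ast\ast\ast}}$ for free by applying this equality to the Banach space $X^{\ast}$ in place of $X$. Indeed, writing $Y = X^{\ast}$, the first equality reads $\wp_{Y} = \wp_{Y^{\ast\ast}}$, which is precisely $\wp_{X^{\ast}} = \wp_{X^{\ast\ast\ast}}$. So the whole corollary reduces to proving $\wp_{X} = \wp_{X^{\ast\ast}}$, and I would do this by establishing the two inequalities separately via Corollary \ref{cor:1}.

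For the inequality $\wp_{X^{\ast\ast}} \leq \wp_{X}$, I would use the canonical isometric embedding $X \hookrightarrow X^{\ast\ast}$. Any subspace of a Banach space is trivially finitely represented in that space: given a finite-dimensional $E \subseteq X$, one takes $F = E \subseteq X^{\ast\ast}$ and lets $T$ be the identity, which is a $(1 + \vep)$-isomorphism for every $\vep > 0$. Hence $X$ is finitely represented in $X^{\ast\ast}$, and Corollary \ref{cor:1} yields $\wp_{X^{\ast\ast}} \leq \wp_{X}$.

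For the reverse inequality $\wp_{X} \leq \wp_{X^{\ast\ast}}$, I would invoke the Principle of Local Reflexivity. This principle asserts that for each finite-dimensional subspace $E \subseteq X^{\ast\ast}$ and each $\vep > 0$ there is a one-to-one operator $T : E \rightarrow X$ with $\| T \| \, \| T^{-1} \| \leq 1 + \vep$. (One may in addition arrange that $T$ fixes the points of $E \cap X$ and respects finitely many prescribed functionals, but only the norm estimate is needed here.) This is exactly the statement that $X^{\ast\ast}$ is finitely represented in $X$, so applying Corollary \ref{cor:1} with the roles of the two spaces reversed gives $\wp_{X} \leq \wp_{X^{\ast\ast}}$.

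Combining the two inequalities gives $\wp_{X} = \wp_{X^{\ast\ast}}$, and the dual statement follows as described above. I do not expect a serious obstacle: the mathematical content lies entirely in the Principle of Local Reflexivity, which is a deep external input but is quoted rather than reproved. The only point requiring a moment's care is bookkeeping the direction of the inequality in Corollary \ref{cor:1}, namely that finite representability of $A$ in $B$ forces $\wp_{B} \leq \wp_{A}$, so that the two applications are set up with the correct spaces playing the roles of $A$ and $B$.
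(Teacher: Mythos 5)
Your proposal is correct and follows essentially the same route as the paper: the inequality $\wp_{X^{\ast\ast}} \leq \wp_{X}$ from the canonical isometric embedding $X \hookrightarrow X^{\ast\ast}$, the reverse inequality from the Principle of Local Reflexivity via Corollary \ref{cor:1}, and the dual statement by substituting $X^{\ast}$ for $X$. The only cosmetic difference is that you route the first inequality through finite representability and Corollary \ref{cor:1}, whereas the paper appeals directly to the fact that generalized roundness exponents pass to isometric subspaces; both are valid.
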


\begin{proof}
As $X$ embeds isometrically into $X^{\ast\ast}$ we see that $\wp_{X\ast\ast} \leq \wp_{X}$.
In addition, the Principle of Local Reflexivity implies that $X^{\ast\ast}$
is finitely represented in $X$. Hence $\wp_{X} \leq \wp_{X^{\ast\ast}}$ by Corollary \ref{cor:1}. By
combining these two inequalities we obtain $\wp_{X} = \wp_{X^{\ast\ast}}$. By replacing $X$
with $X^{\ast}$ we also see that $\wp_{X^{\ast}} = \wp_{X^{\ast\ast\ast}}$.
\end{proof}

Examples show that for a Banach space $X$ we may have $\wp_{X} \not= \wp_{X^{\ast}}$. Indeed,
if $p \in [1, 2)$ and $X = \ell_{p}$, then $\wp_{X} = p$ and $\wp_{X^{\ast}} = 0$. By way of comparison,
if $p \not= 2$ and $X = C_{p}$, then $\wp_{X} = \wp_{X^{\ast}} = 0$. Thus, given Banach space $X$,
the entries of the sequence $(\wp_{X}, \wp_{X^{\ast}}, \wp_{X^{\ast\ast}}, \ldots)$ are restricted
to take on at most two values (in the interval $[0,2]$) by Corollary \ref{cor:2}.

Intimately related to the concept of finite representability is the notion of an ultrapower of a
Banach space. Given an ultrafilter $\mathcal{U}$ on a set $I$ and a Banach space $X$ there is a
canonical procedure to construct a large Banach space $(X)_{\mathcal{U}}$ called the \textit{ultrapower}
of $X$. Importantly, $(X)_{\mathcal{U}}$ contains a natural isometric copy of $X$ and it is finitely
represented in $X$. For a detailed construction of $(X)_{\mathcal{U}}$, and a discussion of the interplay
between finite representability and ultrapowers, we refer the reader to H\'{a}jek and Johanis \cite{Haj}.

\begin{corollary}\label{cor:3}
Let $\mathcal{U}$ be a given ultrafilter on a set $I$ and let $X$ be a Banach space.
Then $\wp_{X} = \wp_{(X)_{\mathcal{U}}}$.
\end{corollary}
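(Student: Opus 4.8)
The plan is to mirror the two-sided argument already used for the bidual in Corollary \ref{cor:2}, replacing the Principle of Local Reflexivity with the corresponding structural facts about ultrapowers recorded in the preceding paragraph. Specifically, I would establish the two opposing inequalities $\wp_{(X)_{\mathcal{U}}} \leq \wp_{X}$ and $\wp_{X} \leq \wp_{(X)_{\mathcal{U}}}$, whose combination forces the desired equality. The only inputs needed are that $(X)_{\mathcal{U}}$ contains a natural isometric copy of $X$ and that $(X)_{\mathcal{U}}$ is finitely represented in $X$, both of which are available from H\'{a}jek and Johanis \cite{Haj}.

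For the first inequality I would exploit the isometric embedding, exactly as the embedding $X \hookrightarrow X^{\ast\ast}$ was used in Corollary \ref{cor:2}. An isometric copy of $X$ inside $(X)_{\mathcal{U}}$ sends each simplex $[a_{i}; b_{j}] \subseteq X$ to a simplex in $(X)_{\mathcal{U}}$ with identical pairwise distances. Hence if $p$ is a generalized roundness exponent of $(X)_{\mathcal{U}}$, so that (\ref{ONE}) holds for every simplex in $(X)_{\mathcal{U}}$, then in particular (\ref{ONE}) holds for the image of every simplex from $X$, and therefore for every simplex in $X$ itself. Thus every generalized roundness exponent of $(X)_{\mathcal{U}}$ is one for $X$, giving $\wp_{(X)_{\mathcal{U}}} \leq \wp_{X}$.

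For the reverse inequality I would simply invoke Corollary \ref{cor:1}: since $(X)_{\mathcal{U}}$ is finitely represented in $X$, that corollary (read with $(X)_{\mathcal{U}}$ as the represented space and $X$ as the ambient space) yields $\wp_{X} \leq \wp_{(X)_{\mathcal{U}}}$. Combining the two bounds gives $\wp_{X} = \wp_{(X)_{\mathcal{U}}}$. I expect no genuine obstacle in the roundness bookkeeping, which is immediate once Corollary \ref{cor:1} is in hand; the entire substance of the statement lives in the two quoted properties of the ultrapower construction, so the real task is only to cite them correctly rather than to reprove them here.
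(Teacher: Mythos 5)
Your proposal is correct and follows essentially the same two-sided argument as the paper: the isometric copy of $X$ in $(X)_{\mathcal{U}}$ gives $\wp_{(X)_{\mathcal{U}}} \leq \wp_{X}$, and Corollary \ref{cor:1} applied to the finite representability of $(X)_{\mathcal{U}}$ in $X$ gives the reverse inequality. The only cosmetic difference is the citation for finite representability (the paper credits Stern \cite{Ste} rather than \cite{Haj}).
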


\begin{proof}
As $X$ embeds isometrically into $(X)_{\mathcal{U}}$ we see that $\wp_{(X)_{\mathcal{U}}} \leq \wp_{X}$.
In addition, $(X)_{\mathcal{U}}$ is finitely represented in $X$ by Stern \cite[Theorem 6.6]{Ste}.
Hence $\wp_{X} \leq \wp_{(X)_{\mathcal{U}}}$ by Corollary \ref{cor:1}.
By combining these two inequalities we obtain $\wp_{X} = \wp_{(X)_{\mathcal{U}}}$.
\end{proof}

Lennard \textit{et al}.\ \cite[Theorem 2.3]{Lt2} noticed that if the infimal cotype of a Banach space
$X$ is greater than two, then $X$ must have generalized roundness zero. By utilizing deep theory and
Corollary \ref{cor:1} we are able to exhibit a more precise relationship between the supremal type and
infimal cotype of a Banach space and its generalized roundness. The notions of type and cotype have
been paramount in the local theory of Banach spaces for quite some time and are defined in the following manner.

\begin{definition}\label{def:type}
A Banach space $X$ is said to have \textit{type} $p$ if there exists a constant
$\mathcal{A} \in (0,\infty)$ such that for all integers $n > 0$ and for all finite sequences
$(x_{j})_{j=1}^{n}$ in $X$, we have
\begin{eqnarray}\label{type:inq}
\sum\limits_{\epsilon \in \{-1,+1\}^{n}}
\left\|\sum\limits_{j=1}^{n} \frac{\epsilon_{j}x_{j}}{2^{n}} \right\|_{X} & \leq &
\mathcal{A}
\left(\sum\limits_{j=1}^{n}\|x_{j}\|_{X}^{p}\right)^{\frac{1}{p}}.
\end{eqnarray}
\textit {Cotype} $p$ is defined similarly but with the inequality (\ref{type:inq}) reversed.
\end{definition}

It is well-known that no Banach space can have type $p > 2$ or cotype $q < 2$.
We let $p(X)$ denote the supremum of all $p$ such that $X$ has type $p$ and
$q(X)$ denote the infimum of all $q$ such that $X$ has cotype $q$.
For an overview of theory of type and cotype we refer the reader to Diestel \textit{et al}.\ \cite{Djt}.

A famous theorem of Maurey and Pisier \cite{Mau} states that $\ell_{p(X)}$
and $\ell_{q(X)}$ are finitely represented in $X$. This theorem and Corollary \ref{cor:1}
provide an immediate link to generalized roundness.

\begin{corollary}\label{cor:4}
If $X$ is a Banach space, then $\wp_{X} \leq \min \{ \wp_{\ell_{p(X)}}, \wp_{\ell_{q(X)}} \}$.
In particular, if $q(X) > 2$, then $\wp_{X} = 0$.
\end{corollary}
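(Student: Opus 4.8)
The plan is to combine the Maurey--Pisier theorem with Corollary~\ref{cor:1} in a completely routine way, so the task is essentially to unpack the statement and verify that each hypothesis lines up with the cited results. By the theorem of Maurey and Pisier \cite{Mau}, both $\ell_{p(X)}$ and $\ell_{q(X)}$ are finitely represented in $X$. First I would apply Corollary~\ref{cor:1} to each of these two finite representations separately: since $\ell_{p(X)}$ is finitely represented in $X$, we obtain $\wp_{X} \leq \wp_{\ell_{p(X)}}$, and since $\ell_{q(X)}$ is finitely represented in $X$, we obtain $\wp_{X} \leq \wp_{\ell_{q(X)}}$. Taking the minimum of the two upper bounds immediately yields the main inequality $\wp_{X} \leq \min\{\wp_{\ell_{p(X)}}, \wp_{\ell_{q(X)}}\}$.

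For the ``in particular'' clause, I would recall the facts about the generalized roundness of $\ell_{p}$-spaces assembled earlier in this section: $\wp_{\ell_{p}} = p$ for $1 \leq p \leq 2$ and $\wp_{\ell_{p}} = 0$ for $p > 2$. The hypothesis $q(X) > 2$ means $X$ has cotype $q$ only for values $q > 2$, so the relevant space is $\ell_{q(X)}$ with $q(X) > 2$, whence $\wp_{\ell_{q(X)}} = 0$. Plugging this into the minimum bound forces $\wp_{X} \leq 0$, and since $0$ is a generalized roundness exponent of every metric space, we conclude $\wp_{X} = 0$.

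I do not anticipate any genuine obstacle in the formal argument, as it is a direct corollary of a quoted theorem. The only point requiring mild care is a definitional one: one must be sure that $\ell_{p(X)}$ is a well-defined and legitimate target space, which requires $p(X) \geq 1$, and this is guaranteed because every Banach space has type $1$ trivially (so the supremum defining $p(X)$ is at least $1$); similarly $q(X) \leq \infty$ and $q(X) \geq 2$ always hold. Thus both $\ell_{p(X)}$ and $\ell_{q(X)}$ are honest $\ell_{p}$-spaces with exponent in $[1,\infty]$, and Corollary~\ref{cor:1} applies to each. The entire proof should therefore occupy only a few lines.
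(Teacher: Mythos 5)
Your proof is correct and follows essentially the same route as the paper: invoke the Maurey--Pisier theorem to get that $\ell_{p(X)}$ and $\ell_{q(X)}$ are finitely represented in $X$, apply Corollary~\ref{cor:1} twice, and then use $\wp_{\ell_{q}} = 0$ for $q > 2$ together with the nonnegativity of generalized roundness for the final clause. The additional remarks on $p(X) \geq 1$ and $q(X) \geq 2$ are harmless padding that the paper omits.
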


\begin{proof}
By the Maurey-Pisier theorem, $\ell_{p(X)}$ and $\ell_{q(X)}$ are finitely represented in $X$.
Hence $\wp_{X} \leq \wp_{\ell_{p(X)}}$ and $\wp_{X} \leq \wp_{\ell_{q(X)}}$ by Corollary \ref{cor:1}.
In particular, if $q(X) > 2$, then $\wp_{\ell_{q(X)}} = 0$, and so $\wp_{X} = 0$.
\end{proof}

There are some classical Banach spaces for which the inequality in Corollary \ref{cor:4} is an
equality. For example, if $X$ is an $L_{p}$-space, $1 \leq p < \infty$, then
$\wp_{X} = \min \{ \wp_{\ell_{p(X)}}, \wp_{\ell_{q(X)}} \}$. In this case, $\wp_{X} = p$ if
$1 \leq p \leq 2$ and $\wp_{X} = 0$ if $p > 2$. Moreover, it is well known that
$p(X) = \min \{ p, 2\}$ and $q(X) = \max \{ p, 2 \}$. So, for example, if $p > 2$, then
$\wp_{X} = 0$ and $q(x) = p$. Thus $\wp_{\ell_{q(X)}} = 0$.
On the other hand, if $X = C_{p}$, $1 \leq p < \infty$, then $p(X)$ and $q(X)$ have the same values as
any $L_{p}$-space but, by inspection, $\wp_{X} = \min \{ \wp_{\ell_{p(X)}}, \wp_{\ell_{q(X)}} \}$
if and only if $p \geq 2$.

\section{Comb-like graphs of generalized roundness one}\label{sec:3}

In this section we apply Theorems \ref{thm:1} and \ref{thm:2} to analyze the generalized roundness
of countable metric trees that resemble combs. We first give
sufficient conditions for the existence of a $(1 + \vep)$-scale isomorphism $\phi : (T,d) \rightarrow (T,\rho)$,
under the assumption that $d$ and $\rho$ are path weighted metrics on a given finite tree $T$.
In what follows, we let $\mathbb{N}$ denote the set of all non-negative integers. Moreover, given a positive
integer $m$, we let $[m]$ denote the segment $\{0, 1, 2, \ldots, m \}$.

\begin{lemma}\label{lem:2}
Let $d$ and $\rho$ be two path weighted metrics on a given finite tree $T$. Let
\begin{align*}
m = \max \left\{ \frac{\rho(\mathbf{a},\mathbf{b})}{d(\mathbf{a},\mathbf{b})} :
\mathbf{a}, \mathbf{b} \in T \text{ and } \mathbf{a} \not= \mathbf{b} \right\}.
\end{align*}
Then there must be an edge $\{ \mathbf{x}, \mathbf{y} \}$ in $T$ such that
$m = \frac{\rho(\mathbf{x}, \mathbf{y})}{d(\mathbf{x}, \mathbf{y})}$.
\end{lemma}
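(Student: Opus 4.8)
The plan is to exploit the defining features of the setting: the tree structure forces a \emph{unique} geodesic between any two vertices, and the path weighted metrics decompose additively along that geodesic. Combined with the elementary mediant inequality for ratios of positive sums, this will show that the maximal ratio can never be strictly improved by passing from a single edge to a longer path, so the maximum must already be attained on an edge.

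First I would fix arbitrary distinct vertices $\mathbf{a}, \mathbf{b} \in T$. Since $T$ is a tree, there is a unique path $\mathbf{a} = v_{0}, v_{1}, \ldots, v_{k} = \mathbf{b}$ joining them, with each $\{ v_{i-1}, v_{i} \}$ an edge of $T$. Because $d$ and $\rho$ are path weighted metrics, the two distances split additively along this geodesic:
\[
d(\mathbf{a},\mathbf{b}) = \sum_{i=1}^{k} d(v_{i-1}, v_{i})
\qquad \text{and} \qquad
\rho(\mathbf{a},\mathbf{b}) = \sum_{i=1}^{k} \rho(v_{i-1}, v_{i}),
\]
where every summand is strictly positive (edge lengths are positive, and consecutive vertices on the path are distinct).

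Next I would invoke the mediant inequality: for positive reals $a_{1}, \ldots, a_{k}$ and $b_{1}, \ldots, b_{k}$ one has $\bigl( \sum_{i} a_{i} \bigr) / \bigl( \sum_{i} b_{i} \bigr) \leq \max_{i} (a_{i}/b_{i})$, since the left-hand side is the convex combination $\sum_{i} \bigl( b_{i} / \sum_{j} b_{j} \bigr) \cdot (a_{i}/b_{i})$ of the individual ratios. Applying this with $a_{i} = \rho(v_{i-1}, v_{i})$ and $b_{i} = d(v_{i-1}, v_{i})$ yields
\[
\frac{\rho(\mathbf{a},\mathbf{b})}{d(\mathbf{a},\mathbf{b})}
\leq \max_{1 \leq i \leq k} \frac{\rho(v_{i-1}, v_{i})}{d(v_{i-1}, v_{i})},
\]
so the ratio realized by any pair is dominated by the ratio realized by some single edge lying on the geodesic between them.

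Finally I would assemble the conclusion. Every edge is in particular a pair of distinct vertices, so the maximum of the ratio taken over edges is at most $m$; conversely, the displayed inequality shows that $m$ is at most the maximum of the ratio over edges. Hence the two maxima coincide, and since $T$ is finite it has only finitely many edges, so this edge-maximum is genuinely attained by some edge $\{ \mathbf{x}, \mathbf{y} \}$, which is exactly the edge asserted by the lemma. I do not expect a serious obstacle here; the single conceptual step is recognizing that the geodesic decomposition reduces the statement to the mediant inequality, and the only routine points to check are that the denominators are nonzero (guaranteed since $d$ is a metric with positive edge weights) and that the supremum is a maximum (guaranteed by the finiteness of $T$).
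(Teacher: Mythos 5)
Your proof is correct and rests on the same idea as the paper's: the additive decomposition of both path metrics along the unique geodesic, together with the fact that a ratio of positive sums is bounded by the largest of the individual ratios. The paper merely runs this as a two-term mediant step applied iteratively (inserting one intermediate vertex at a time), whereas you apply the $k$-term mediant inequality to the full edge decomposition in one stroke; the substance is identical.
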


\begin{proof}
Suppose that $\mathbf{a}, \mathbf{c} \in T$ are non-adjacent vertices such that
$m = \rho(\mathbf{a}, \mathbf{c})/d(\mathbf{a}, \mathbf{c})$.
Then we may choose a strictly intermediate vertex $\mathbf{b} \in T$ on the geodesic from $\mathbf{a}$ to $\mathbf{c}$.
Now let $q = \rho(\mathbf{a}, \mathbf{b})/d(\mathbf{a}, \mathbf{b})$ and
$r = \rho(\mathbf{b}, \mathbf{c})/d(\mathbf{b}, \mathbf{c})$. Without loss of generality
we may assume that $q \geq r$. Furthermore, as $\rho$ and $d$ are path metrics on $T$, we
have $\rho(\mathbf{a}, \mathbf{c}) = \rho(\mathbf{a}, \mathbf{b}) + \rho(\mathbf{b}, \mathbf{c})$
and $d(\mathbf{a}, \mathbf{c}) = d(\mathbf{a}, \mathbf{b}) + d(\mathbf{b}, \mathbf{c})$. In particular,
it follows that
\begin{eqnarray*}
m &   =  & \frac{\rho(\mathbf{a}, \mathbf{c})}{d(\mathbf{a}, \mathbf{c})} \\
  &   =  & \frac{\rho(\mathbf{a}, \mathbf{b}) + \rho(\mathbf{b}, \mathbf{c})}{d(\mathbf{a}, \mathbf{b}) + d(\mathbf{b}, \mathbf{c})} \\
  &   =  & \frac{qd(\mathbf{a}, \mathbf{b}) + rd(\mathbf{b}, \mathbf{c})}{d(\mathbf{a}, \mathbf{b}) + d(\mathbf{b}, \mathbf{c})} \\
  & \leq & \frac{qd(\mathbf{a}, \mathbf{b}) + qd(\mathbf{b}, \mathbf{c})}{d(\mathbf{a}, \mathbf{b}) + d(\mathbf{b}, \mathbf{c})} \\
  &   =  & q.
\end{eqnarray*}
Therefore, by definition of $m$, it must be the case that $m = q$. This shows that we can always
pass to a pair of vertices connected by a geodesic with fewer edges and preserve the ratio $m$.
Applying this logic finitely many times gives the lemma.
\end{proof}

The following analogous lemma for minima may be proved in the same way.

\begin{lemma}\label{lem:2.5}
Let $d$ and $\rho$ be two path weighted metrics on a given finite tree $T$. Let
\begin{align*}
m^{\ast} = \min \left\{ \frac{\rho(\mathbf{a},\mathbf{b})}{d(\mathbf{a},\mathbf{b})} :
\mathbf{a}, \mathbf{b} \in T \text{ and } \mathbf{a} \not= \mathbf{b} \right\}.
\end{align*}
Then there must be an edge $\{ \mathbf{x}, \mathbf{y} \}$ in $T$ such that
$m^{\ast} = \frac{\rho(\mathbf{x}, \mathbf{y})}{d(\mathbf{x}, \mathbf{y})}$.
\end{lemma}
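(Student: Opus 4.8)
The plan is to mirror the descent argument used for Lemma \ref{lem:2}, reversing the relevant inequalities. First I would take a pair of vertices $\mathbf{a}, \mathbf{c} \in T$ with $\mathbf{a} \neq \mathbf{c}$ that realizes the minimum, so that $m^{\ast} = \rho(\mathbf{a}, \mathbf{c})/d(\mathbf{a}, \mathbf{c})$. If $\mathbf{a}$ and $\mathbf{c}$ are already adjacent there is nothing to prove, so suppose they are non-adjacent. Then the unique geodesic in $T$ joining them contains a strictly intermediate vertex $\mathbf{b}$, which I would fix.

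Next I would introduce the ratios $q = \rho(\mathbf{a}, \mathbf{b})/d(\mathbf{a}, \mathbf{b})$ and $r = \rho(\mathbf{b}, \mathbf{c})/d(\mathbf{b}, \mathbf{c})$, assuming without loss of generality that $q \leq r$ (this is the sole change from the maximum case, where one assumes $q \geq r$). Because $\rho$ and $d$ are both path metrics on $T$ and $\mathbf{b}$ lies on the geodesic from $\mathbf{a}$ to $\mathbf{c}$, both metrics are additive along this geodesic, giving $\rho(\mathbf{a}, \mathbf{c}) = \rho(\mathbf{a}, \mathbf{b}) + \rho(\mathbf{b}, \mathbf{c})$ and $d(\mathbf{a}, \mathbf{c}) = d(\mathbf{a}, \mathbf{b}) + d(\mathbf{b}, \mathbf{c})$. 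Expressing $m^{\ast}$ as a weighted mediant of $q$ and $r$ then yields
\begin{eqnarray*}
m^{\ast} &   =  & \frac{\rho(\mathbf{a}, \mathbf{b}) + \rho(\mathbf{b}, \mathbf{c})}{d(\mathbf{a}, \mathbf{b}) + d(\mathbf{b}, \mathbf{c})} \\
         &   =  & \frac{q\,d(\mathbf{a}, \mathbf{b}) + r\,d(\mathbf{b}, \mathbf{c})}{d(\mathbf{a}, \mathbf{b}) + d(\mathbf{b}, \mathbf{c})} \\
         & \geq & \frac{q\,d(\mathbf{a}, \mathbf{b}) + q\,d(\mathbf{b}, \mathbf{c})}{d(\mathbf{a}, \mathbf{b}) + d(\mathbf{b}, \mathbf{c})} \\
         &   =  & q,
\end{eqnarray*}
where the inequality uses $r \geq q$ together with the positivity of the edge weights (hence $d(\mathbf{b}, \mathbf{c}) > 0$).

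Since $m^{\ast}$ is the minimum of all such ratios, we also have $m^{\ast} \leq q$, and therefore $m^{\ast} = q$. Thus the minimal ratio is realized as well by the pair $\mathbf{a}, \mathbf{b}$, whose connecting geodesic has strictly fewer edges. Finally I would iterate this reduction: each step preserves the value $m^{\ast}$ while decreasing the number of edges on the realizing geodesic, so after finitely many steps (the tree being finite) the minimum is attained by a pair of adjacent vertices, i.e.\ by an edge $\{ \mathbf{x}, \mathbf{y} \}$. I do not anticipate any genuine obstacle here, since the structure is identical to Lemma \ref{lem:2}; the only point demanding care is the direction of the mediant inequality, which flips relative to the maximum case precisely because one now selects the smaller of $q$ and $r$.
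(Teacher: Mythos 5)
Your proof is correct and is exactly the argument the paper intends: the paper simply remarks that Lemma \ref{lem:2.5} ``may be proved in the same way'' as Lemma \ref{lem:2}, and your write-up carries out that mirrored descent, correctly flipping the mediant inequality by taking $q \leq r$. No issues.
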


\begin{theorem}\label{thm:2}
Let $\vep > 0$ be given. Let $d$ and $\rho$ be two path weighted metrics on a given finite tree $T$.
If there exists a constant $n = n(\vep) > 0$ such that
\begin{align}\label{asi:inq}
(1 - \vep)n \leq \frac{\rho(\mathbf{x}, \mathbf{y})}{d(\mathbf{x}, \mathbf{y})} \leq n(1 + \vep)
\end{align}
for each edge $\{ \mathbf{x}, \mathbf{y} \}$ in $T$, then the identity map
$\phi : (T,d) \rightarrow (T,\rho): x \mapsto x$ is a $(1 + \vep)$-scale isomorphism.
\end{theorem}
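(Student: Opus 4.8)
The plan is to reduce the global scale-isomorphism inequality of Definition \ref{def:1} to the edge-wise hypothesis (\ref{asi:inq}) by invoking the extremality Lemmas \ref{lem:2} and \ref{lem:2.5}. Unwinding Definition \ref{def:1}, the assertion that the identity map $\phi : (T,d) \to (T,\rho)$ is a $(1+\vep)$-scale isomorphism with the prescribed constant $n$ amounts to verifying
\[
(1-\vep)n \leq \frac{\rho(\mathbf{a},\mathbf{b})}{d(\mathbf{a},\mathbf{b})} \leq (1+\vep)n
\]
for every pair of distinct vertices $\mathbf{a}, \mathbf{b} \in T$; the case $\mathbf{a} = \mathbf{b}$ is trivial since both distances vanish.

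First I would introduce the global extremal ratios
\[
m = \max \frac{\rho(\mathbf{a},\mathbf{b})}{d(\mathbf{a},\mathbf{b})}, \qquad m^{\ast} = \min \frac{\rho(\mathbf{a},\mathbf{b})}{d(\mathbf{a},\mathbf{b})},
\]
the extrema being taken over all distinct $\mathbf{a}, \mathbf{b} \in T$; these are well-defined because $T$ is finite. By Lemma \ref{lem:2} the maximum $m$ is attained on some edge $\{\mathbf{x},\mathbf{y}\}$, and by Lemma \ref{lem:2.5} the minimum $m^{\ast}$ is attained on some (possibly different) edge. Feeding each of these two edges into the hypothesis (\ref{asi:inq}) yields $m \leq (1+\vep)n$ and $m^{\ast} \geq (1-\vep)n$.

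It then follows that for every pair of distinct vertices,
\[
(1-\vep)n \leq m^{\ast} \leq \frac{\rho(\mathbf{a},\mathbf{b})}{d(\mathbf{a},\mathbf{b})} \leq m \leq (1+\vep)n,
\]
which is exactly the condition required by Definition \ref{def:1}. The only genuine content here is the passage from edges to arbitrary pairs, and that work has already been carried out in Lemmas \ref{lem:2} and \ref{lem:2.5}. Consequently I do not anticipate any real obstacle: once the extremal ratios are known to be realized on edges, the global two-sided estimate is forced by the edge-wise hypothesis, since whichever edges realize $m$ and $m^{\ast}$ automatically obey (\ref{asi:inq}), and the displayed chain of inequalities closes.
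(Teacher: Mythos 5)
Your proposal is correct and follows essentially the same route as the paper: both arguments invoke Lemmas \ref{lem:2} and \ref{lem:2.5} to locate the extremal ratios $m$ and $m^{\ast}$ on edges, apply the hypothesis (\ref{asi:inq}) to those edges, and then sandwich every ratio $\rho(\mathbf{a},\mathbf{b})/d(\mathbf{a},\mathbf{b})$ between $(1-\vep)n$ and $(1+\vep)n$. No gaps.
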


\begin{proof}
Using the notation of Lemmas \ref{lem:2} and \ref{lem:2.5} it follows from (\ref{asi:inq})
that
\begin{align*}
(1 - \vep)n \leq m^{\ast} \leq m \leq n(1 + \vep).
\end{align*}
Thus, given any two distinct vertices $\mathbf{a}, \mathbf{b} \in T$,
we deduce that $(1 - \vep)n \leq \rho(\mathbf{a}, \mathbf{b})/d(\mathbf{a}, \mathbf{b}) \leq n(1 + \vep)$
by definition of $m$ and $m^{\ast}$.
Hence the identity map $\phi : (T,d) \rightarrow (T,\rho): x \mapsto x$ is a $(1 + \vep)$-scale isomorphism.
\end{proof}

We now apply Theorems \ref{thm:1} and \ref{thm:2} to analyze the generalized roundness
of certain countable metric trees that resemble combs.

\begin{definition}\label{combs}
The vertex set $V$ of the \textit{infinite comb} $C$ consists of the points $\mathbf{x}_{0}$, $\mathbf{x}_{k}$
and $\mathbf{y}_{k}$, where $k$ is any positive integer. The edge set $E$ of $C$ consists of the
unordered pairs $\{ \mathbf{x}_{k-1}, \mathbf{x}_{k} \}$ and $\{ \mathbf{x}_{k}, \mathbf{y}_{k} \}$,
where $k$ is any positive integer.

For each positive integer $m$, the finite subtree of $C$ that has vertex set
$\{ \mathbf{x}_{1+k}, \mathbf{y}_{1+k} \,|\, k \in [m] \}$ will be called the $m$-\textit{comb}.
The $m$-comb will be denoted $C_{m}$.
\end{definition}

We are interested in placing various path metrics on the infinite comb $C$ and the $m$-comb $C_{m}$.
One way to do this is to adopt the following canonical procedure.

\begin{definition}\label{ice:def}
Let $f : \mathbb{N} \rightarrow (0, \infty)$ be a function.
We define a path metric $\rho_{f}$ on the infinite comb $C$ in the following manner:
\begin{enumerate}
\item[(1)] $\rho_{f}(\mathbf{x}_{k-1},\mathbf{x}_{k}) = f(k-1)$, and

\item[(2)] $\rho_{f}(\mathbf{x}_{k},\mathbf{y}_{k}) = f(k)$ for each positive integer $k$.
\end{enumerate}
All other distances in $C$ are then determined geodesically. The resulting metric tree will be denoted $C(f)$.
If, moreover, we restrict $\rho_{f}$ to the $m$-comb, the resulting metric tree will be denoted $C_{m}(f)$.
\end{definition}

There are some special cases of Definition \ref{ice:def} worth highlighting.
If $f(k) = 1$ for all $k \geq 0$, the resulting metric trees $C(f)$ and $C_{m}(f)$ will
be denoted $C(1)$ and $C_{m}(1)$, respectively. In other words, $C(1)$ and $C_{m}(1)$ are the combs $C$ and $C_{m}$
endowed with the usual combinatorial path metric $\delta$. Caffarelli \textit{et al}.\ \cite{Dcw} have
shown that $\wp_{C_{m}(1)} \rightarrow 1$ as $m \rightarrow \infty$. Hence $\wp_{C(1)} = 1$.
We will see presently that by placing mild assumptions on the function $f$ it follows that $\wp_{C(f)} = 1$.
Our arguments will be facilitated by the following lemma.

\begin{lemma}\label{lem:3}
If the $m$-comb $C_{m}(1)$ is locally represented in a metric tree $(T, d)$ for all integers $m > 0$,
then $\wp_{(T, d)} = 1$.
\end{lemma}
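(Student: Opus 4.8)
The plan is to combine Theorem \ref{thm:1} with two facts already recorded in the introduction: the asymptotic computation $\wp_{C_{m}(1)} \rightarrow 1$ as $m \rightarrow \infty$ due to Caffarelli \textit{et al}.\ \cite{Dcw}, and the folklore result that every metric tree has generalized roundness at least one. The argument is essentially a packaging of these ingredients, so I expect no serious technical obstacle; the one point that must be handled correctly is the direction of the inequality supplied by Theorem \ref{thm:1}.

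First I would fix a positive integer $m$ and apply Theorem \ref{thm:1} with the comb $C_{m}(1)$ playing the role of the space that is locally represented and $(T,d)$ playing the role of the ambient space $(X^{\prime}, \rho)$. By hypothesis $C_{m}(1)$ is locally represented in $(T,d)$, so Theorem \ref{thm:1} yields the inequality $\wp_{(T,d)} \leq \wp_{C_{m}(1)}$. It is crucial to orient this correctly: the conclusion of Theorem \ref{thm:1} is $\wp_{X^{\prime}} \leq \wp_{X}$ when $X$ is locally represented in $X^{\prime}$, so here the tree's exponent is bounded above by the comb's exponent, not the other way around. Since the hypothesis holds for all $m > 0$, this inequality holds for every $m$.

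Next I would pass to the limit. Because $\wp_{(T,d)} \leq \wp_{C_{m}(1)}$ for each $m$, and because $\wp_{C_{m}(1)} \rightarrow 1$ as $m \rightarrow \infty$ by the result of Caffarelli \textit{et al}.\ \cite{Dcw} quoted just before the lemma, letting $m \rightarrow \infty$ forces $\wp_{(T,d)} \leq 1$. This gives the upper bound.

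Finally I would invoke the complementary lower bound: since $(T,d)$ is a metric tree, the folklore fact recalled in the introduction (derivable, e.g., from Kelly \cite[Theorem II]{Jbk} or Faver \textit{et al}.\ \cite[Proposition 4.1]{Fav}) guarantees $\wp_{(T,d)} \geq 1$. Combining the two bounds $\wp_{(T,d)} \leq 1$ and $\wp_{(T,d)} \geq 1$ yields $\wp_{(T,d)} = 1$, completing the proof.
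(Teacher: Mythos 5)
Your proposal is correct and follows exactly the paper's argument: apply Theorem \ref{thm:1} to get $\wp_{(T,d)} \leq \wp_{C_{m}(1)}$ for every $m$, let $m \to \infty$ using the result of Caffarelli \textit{et al}.\ to obtain $\wp_{(T,d)} \leq 1$, and combine with the folklore lower bound $\wp_{(T,d)} \geq 1$ for metric trees. You also correctly oriented the inequality from Theorem \ref{thm:1}, which is the only delicate point.
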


\begin{proof}
All metric trees have generalized roundness at least one.
By Theorem \ref{thm:1}, $\wp_{(T, d)} \leq \wp_{C_{m}(1)}$ for all $m > 0$.
Moreover, Caffarelli \textit{et al}.\ \cite{Dcw} have shown that $\wp_{C_{m}(1)} \rightarrow 1$
as $m \rightarrow \infty$. Hence $\wp_{(T, d)} = 1$.
\end{proof}

\begin{definition}
A function $f : \mathbb{N} \rightarrow (0, \infty)$ is said to be \textit{additively sub-exponential} if
$\lim\limits_{n \rightarrow \infty} \frac{f(n+m)}{f(n)}= 1$ for each integer $m > 0$.
\end{definition}

The class of additively sub-exponential functions $f : \mathbb{N} \rightarrow (0, \infty)$ is very large.
For instance, $f$ could be any rational function that takes positive values on $\mathbb{N}$. Other interesting
possibilities for $f$ include inverse tangent, logarithmic functions (translated suitably), and classically
sub-exponential functions such as $e^{\sqrt{n}}$. Furthermore, if $f : \mathbb{N} \rightarrow (0, \infty)$
is an additively sub-exponential function, then so is $1/f$.

\begin{theorem}\label{thm:3}
If $f : \mathbb{N} \rightarrow (0, \infty)$ is additively sub-exponential function, then $\wp_{C(f)} = 1$.
\end{theorem}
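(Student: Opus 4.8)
The plan is to show that the combinatorial $m$-comb $C_m(1)$ is locally represented in $C(f)$ for every integer $m > 0$, and then to invoke Lemma \ref{lem:3}. Since $C_m(1)$ is a finite metric space, and since the restriction of a $(1+\vep)$-scale isomorphism to a subset is again a $(1+\vep)$-scale isomorphism with the same constant $n$, it suffices to produce, for each $\vep > 0$, a single $(1+\vep)$-scale isomorphism of all of $C_m(1)$ into $C(f)$.

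First I would exploit the self-similarity of the infinite comb. For a large index $N$ to be chosen below, consider the shift map $\phi$ that sends $\mathbf{x}_{1+k} \mapsto \mathbf{x}_{N+k}$ and $\mathbf{y}_{1+k} \mapsto \mathbf{y}_{N+k}$ for $k \in [m]$. By Definition \ref{combs} this is a graph isomorphism from $C_m$ onto the finite subtree $T_N$ of $C$ spanned by $\{ \mathbf{x}_{N+k}, \mathbf{y}_{N+k} : k \in [m] \}$, so it carries edges to edges and is an isometry from $(C_m, \rho)$ onto $(T_N, \rho_f)$, where $\rho$ denotes the pullback path weighted metric on $C_m$. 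By Definition \ref{ice:def}, each spine edge $\{ \mathbf{x}_{N+j-1}, \mathbf{x}_{N+j} \}$ of $T_N$ carries weight $f(N+j-1)$ and each tooth edge $\{ \mathbf{x}_{N+j-1}, \mathbf{y}_{N+j-1} \}$ carries weight $f(N+j-1)$; in every case the weight has the form $f(j)$ with $j$ ranging over the fixed window $\{ N, N+1, \ldots, N+m \}$.

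Next I would compare $\rho$ against the combinatorial metric $\delta$ on $C_m$ using Theorem \ref{thm:2}. On each edge of $C_m$ the ratio $\rho / \delta$ equals $f(j)$ for some $j \in \{ N, \ldots, N+m \}$, since all $\delta$-edge-lengths are $1$. Setting $n = f(N)$, the hypothesis $(1-\vep)n \leq \rho/\delta \leq n(1+\vep)$ of Theorem \ref{thm:2} asks precisely that $f(N+i)/f(N) \in [1-\vep, 1+\vep]$ for every $i \in \{ 0, 1, \ldots, m \}$. This is exactly where the additively sub-exponential hypothesis enters: for each fixed shift $i$ we have $f(N+i)/f(N) \to 1$ as $N \to \infty$ (trivially for $i = 0$, and by definition for $i \geq 1$), and since there are only finitely many such $i$ I may choose $N = N(\vep, m)$ large enough that all $m+1$ ratios simultaneously lie in $[1-\vep, 1+\vep]$. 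For such $N$, Theorem \ref{thm:2} shows that the identity map $(C_m, \delta) \to (C_m, \rho)$ is a $(1+\vep)$-scale isomorphism; composing it with the isometry $\phi$ yields a $(1+\vep)$-scale isomorphism of $C_m(1)$ into $C(f)$.

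This establishes that $C_m(1)$ is locally represented in $C(f)$ for every $m > 0$, whence Lemma \ref{lem:3} gives $\wp_{C(f)} = 1$. The only real subtlety — and the step deserving care — is the uniform control of the weight ratios: it is essential that the index window $\{ N, \ldots, N+m \}$ has a fixed width depending only on $m$ and not on $N$, so that the finitely many limits $f(N+i)/f(N) \to 1$ can be arranged to hold at once. This is the precise role played by the additive (as opposed to multiplicative) sub-exponential condition, and it is what allows copies of $C_m$ to be pushed far enough out along the comb that the edge weighting becomes asymptotically uniform.
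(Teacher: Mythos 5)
Your proposal is correct and follows essentially the same route as the paper's proof: shift a copy of $C_m$ far out along the spine of $C(f)$, use the additively sub-exponential condition to force all edge weights in the resulting window $\{f(N),\ldots,f(N+m)\}$ to within a factor of $1\pm\vep$ of $n=f(N)$, apply Theorem \ref{thm:2} to get a $(1+\vep)$-scale isomorphism, and conclude via Lemma \ref{lem:3}. Your explicit remark that restrictions of scale isomorphisms remain scale isomorphisms (so that one map of all of $C_m(1)$ suffices) is a nice touch of care the paper handles only implicitly.
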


\begin{proof} By Lemma \ref{lem:3}, it suffices to prove that $C_{m}(1)$ is locally represented
in $C(f)$ for all $m > 0$.

Let $m > 0$ be a given integer. Let $\vep > 0$ be given. As $f$ is additively sub-exponential, we may
choose an integer $n_{0} > 0$ so that $1 - \vep \leq f(n+k)/f(n) \leq 1 + \vep$ for each $k \in [m]$ and all
$n \geq n_{0}$. In particular, we have
\begin{align*}
1 - \vep \leq \frac{f(n_{0}+k)}{f(n_{0})} \leq 1 + \vep
\end{align*}
for each $k \in [m]$.
Consider the subtree $Y^{\prime}$ of $C(f)$ that has vertices $\mathbf{x}_{n_{0} + k}$
and $\mathbf{y}_{n_{0} + k}$ for all $k \in [m]$. As simple (unweighted) graphs,
$Y^{\prime}$ and $C_{m}(1)$ are one and the same graph; namely, $C_{m}$.
Let $\phi : C_{m}(1) \rightarrow Y^{\prime}$ denote this natural identification.
Let $\rho$ denote the path metric that $Y^{\prime}$ inherits from $C(f)$.
We may regard the metrics on $Y^{\prime}$ and $C_{m}(1)$ as path metrics on $C_{m}$.
For each edge $\{ \mathbf{s}, \mathbf{t} \}$ in $C_{m}$ we have, by choice of $n_{0}$,
\begin{align*}
f(n_{0})(1 - \vep) \leq \min\limits_{k \in [m]} f(n_{0} + k)\leq
\frac{\rho(\phi(\mathbf{s}), \phi(\mathbf{t}))}{\delta(\mathbf{s}, \mathbf{t})} \leq
\max\limits_{k \in [m]} f(n_{0} + k) < f(n_{0})(1 + \vep).
\end{align*}
It follows from Theorem \ref{thm:2} that $\phi : C_{m}(1) \rightarrow Y^{\prime}$ is a $(1 + \vep)$-scale
isomorphism.
As $C_{m}(1)$ is a finite metric space and as $\vep > 0$ was arbitrary, we conclude that
$C_{m}(1)$ is locally represented in $C(f)$.
\end{proof}

It also follows from Theorem \ref{thm:3} that if $f$ is an additively sub-exponential function, then $\wp_{C(f)} = \wp_{C(1/f)}$.

\section{Convergent and divergent spherically symmetric trees of generalized roundness one}\label{sec:4}

Caffarelli \textit{et al}.\ \cite{Dcw} considered the generalized roundness of spherically symmetric
trees endowed with the usual combinatorial path metric (wherein all edges in the tree are assumed to have unit length).
In this section we consider a broader class of spherically symmetric trees by relaxing the requirement that all
edges have unit length. This allows one to make a distinction between \textit{convergent} and \textit{divergent} spherically
symmetric trees. In order to proceed we review the basic definitions and notations for spherically symmetric
trees. In addition, we introduce the notion of a downward length sequence for a spherically symmetric tree.

Given a vertex $\mathbf{v}_{0}$ in a tree $T$, we let
$r(T, \mathbf{v}_{0}) = \sup \{ \delta(\mathbf{v}_{0}, \mathbf{v}) : \mathbf{v} \in T \}$, where $\delta$
denotes the usual combinatorial path metric on $T$. We call $r(T, \mathbf{v}_{0})$ the
$\mathbf{v}_{0}$-\textit{depth} of $T$. Naturally included here is the possibility that the
$\mathbf{v}_{0}$-depth of $T$ may be infinite.
A vertex $\mathbf{v}$ of $T$ is a \textit{level} $k$ vertex of $T$ if $\delta(\mathbf{v}_{0}, \mathbf{v}) = k$.
The \textit{children} of a level $k$ vertex $\mathbf{v} \in T$ consist of all level $k + 1$ vertices $\mathbf{w} \in T$
such that $\delta(\mathbf{v}, \mathbf{w}) = 1$. We let $d_{k}(\mathbf{v})$ denote the number of children of $\mathbf{v}$.

\begin{definition}\label{s:tree}
A tree $T$ is said to be \textit{spherically symmetric} if we can choose a vertex $\mathbf{v}_{0} \in T$ so that
for any $k$, all level $k$ vertices of $T$ have the same number of children.
Such a pair $(T, \mathbf{v}_{0})$ will be called a \textit{spherically symmetric tree} (SST).
\end{definition}

Notice that if $\mathbf{v}$ is a level $k$ vertex in a given SST $(T, \mathbf{v}_{0})$, then
$d_{k} = d_{k}(\mathbf{v})$ only depends upon $k$. Thus $d_{k}$ is the number of children
of any level $k$ vertex in $T$. We call the (possibly finite) sequence $(d_k)_{0 \le k < r(T, \mathbf{v}_{0})}$
the \textit{downward degree sequence} of $(T, \mathbf{v}_{0})$. We will say that $(d_k)$ is \textit{non-trivial}
provided $d_{k} > 1$ for at least one $k$ such that $0 \leq k < r(T, \mathbf{v}_{0})$.

Now suppose that $(T, \mathbf{v}_{0})$ is a given SST with downward degree sequence $(d_{k})$.
If, for each $k$ such that $0 \le k < r(T, \mathbf{v}_{0})$, $l_{k}$ is a positive real number, we will call
$\ell = (l_{0}, l_{1}, l_{2}, \ldots)$ a \textit{downward length sequence} for $(T, \mathbf{v}_{0})$.
Given such a sequence $\ell$, we may define a path metric $\rho_{\ell}$ on $T$ in the following manner.
For any $k$ such that $0 \leq k < r(T, \mathbf{v}_{0})$,
if $\mathbf{w}$ is a child of a level $k$ vertex $\mathbf{v} \in T$, we define $\rho_{\ell}(\mathbf{v}, \mathbf{w}) = l_{k}$.
All other $\rho_{\ell}$-distances in $T$ are then determined geodesically. The resulting metric tree
$(T, \rho_{\ell})$ is said to be \textit{convergent} if $\sum l_{k} < \infty$ and \textit{divergent} if
$\sum l_{k} = \infty$. One significance of convergent SSTs is that they have finite diameter.

We proceed to show that large classes of divergent and convergent SSTs have generalized roundness one.
The following lemma is a variation of \cite[Theorem 2.1]{Dcw}. As the statement of the lemma
is complicated, we will comment on the intuition behind this result. Among all $n$-point
metric trees endowed with the usual combinatorial path metric, the complete bipartite graph $K_{1, n-1}$
has the smallest generalized roundness. Moreover, as $n \rightarrow \infty$, the generalized roundness
of $K_{1, n-1}$ tends to one. The conditions placed on the SST in the statement of the following lemma
ensure that it contains a star-like structure that resembles $K_{1, q}$, $q = d_{0}d_{1}\cdots d_{k}$,
modulo scaling. Such an SST must have generalized roundness relatively close to one.

\begin{lemma}\label{sst}
Let $(T, \mathbf{v}_{0})$ be a finite SST with a non-trivial downward degree sequence $(d_{0}, d_{1}, \ldots, d_{n-1})$.
Suppose $\ell = (l_{0}, l_{1}, \ldots, l_{n-1})$ is a downward length sequence for $(T, \mathbf{v}_{0})$ that
satisfies $2l_{0} < l_{0} + l_{1} + \cdots + l_{n-1}$. For each $k$, $1 \leq k \leq n$, set
$M_{k} = \sum_{i=0}^{k-1} l_{i}$ and let $m$ be the largest integer $k$ such that $M_{k} < \frac{1}{2} M_{n}$.
For each non-negative integer $k \leq m$ such that $d_{0}d_{1}\cdots d_{k} > 1$, we have
\begin{eqnarray}
\wp_{(T, \rho_{\ell})} & \leq &
\frac{\ln\left(2+\frac{2}{(d_{0}d_{1}\cdots d_{k}) - 1}\right)}{\ln\left(2-\frac{2M_{k}}{M_{n}}\right)}.
\end{eqnarray}
If $d_{0}d_{1}\cdots d_{m} = 1$, then we have the trivial bound $\wp_{(T, \rho_{\ell})} \leq 2$.
\end{lemma}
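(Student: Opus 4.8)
The plan is to prove the stated upper bounds by exhibiting, for each admissible $k$, a single simplex in $(T,\rho_{\ell})$ that violates the generalized roundness inequality (\ref{ONE}) for every $p$ strictly larger than the claimed threshold. Because the set of generalized roundness exponents of any metric space is a closed interval of the form $[0,\wp]$, producing such a violating simplex for all $p>\beta_{k}$, where $\beta_{k}$ denotes the right-hand side of the displayed bound, immediately forces $\wp_{(T,\rho_{\ell})}\le\beta_{k}$. The configuration I would use is a weighted, ``stretched'' version of the standard bad simplex for the star $K_{1,q}$ alluded to in the intuition preceding the lemma.

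Fix $k\le m$ with $q\defeq d_{0}d_{1}\cdots d_{k}>1$, and record two metric facts coming from spherical symmetry and the path-metric structure. First, any level-$n$ vertex lies at $\rho_{\ell}$-distance $M_{n}$ from the root $\mathbf{v}_{0}$, since the descending geodesic traverses edges of lengths $l_{0},\ldots,l_{n-1}$. Second, $T$ has exactly $q$ vertices at level $k+1$, and as every downward degree is at least one, each of these has a descendant at level $n$; picking one leaf $a_{1},\ldots,a_{q}$ from each of the $q$ disjoint subtrees yields $q$ distinct level-$n$ vertices whose mutual most recent common ancestor sits at some level $j\le k$, so that each pairwise distance equals $2(M_{n}-M_{j})\ge 2(M_{n}-M_{k})$. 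Now take the simplex $[a_{1},\ldots,a_{q};\mathbf{v}_{0},\ldots,\mathbf{v}_{0}]$ of the $q$ leaves against $q$ copies of the root. The $b$-side contributes nothing to the left-hand side of (\ref{ONE}), the cross term on the right collapses to $q^{2}M_{n}^{p}$, and the left-hand side is bounded below by $\binom{q}{2}\bigl(2(M_{n}-M_{k})\bigr)^{p}$; hence (\ref{ONE}) fails as soon as
\[
\binom{q}{2}\bigl(2(M_{n}-M_{k})\bigr)^{p} > q^{2}M_{n}^{p},
\]
which rearranges to $\bigl(2-\tfrac{2M_{k}}{M_{n}}\bigr)^{p}>2+\tfrac{2}{q-1}$. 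This is exactly where the hypothesis $k\le m$ enters: it guarantees $M_{k}<\tfrac12 M_{n}$, so the base $2-\tfrac{2M_{k}}{M_{n}}$ exceeds $1$ and I may take logarithms to see that the inequality fails precisely for $p>\tfrac{\ln(2+2/(q-1))}{\ln(2-2M_{k}/M_{n})}$, giving the bound.

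For the degenerate case $d_{0}\cdots d_{m}=1$, I would instead exploit a branch point directly rather than the root. Non-triviality of the degree sequence supplies a level $k^{\ast}$ with $d_{k^{\ast}}\ge 2$; choose a level-$k^{\ast}$ vertex $\mathbf{v}$ and two of its children $\mathbf{w}_{1},\mathbf{w}_{2}$. By spherical symmetry both child edges have the common length $l_{k^{\ast}}$, so $\mathbf{v}$ is the exact midpoint of $\mathbf{w}_{1}$ and $\mathbf{w}_{2}$. The simplex $[\mathbf{w}_{1},\mathbf{w}_{2};\mathbf{v},\mathbf{v}]$ then makes the left-hand side of (\ref{ONE}) equal to $(2l_{k^{\ast}})^{p}$ and the right-hand side equal to $4l_{k^{\ast}}^{p}$, so (\ref{ONE}) fails for every $p>2$ and $\wp_{(T,\rho_{\ell})}\le 2$.

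The main obstacle to keep in view is that, unlike a genuine star $K_{1,q}$, the chosen leaves are not mutually equidistant, so the left-hand side of (\ref{ONE}) cannot be evaluated exactly. The remedy is to retain only the uniform lower bound $2(M_{n}-M_{k})$ on all pairwise leaf distances, which is enough because a strict violation is all that is required. I would finally double-check consistency: since $0\le M_{k}<\tfrac12 M_{n}$ and $q\ge 2$, the base of the logarithm lies in $(1,2]$ and the resulting threshold always exceeds $1$, in keeping with the fact that every metric tree has generalized roundness at least one.
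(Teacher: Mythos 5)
Your proposal is correct and follows essentially the same route as the paper: the same simplex of $q = d_{0}d_{1}\cdots d_{k}$ leaves (one per level-$(k+1)$ subtree) set against $q$ copies of the root, the same lower bound $2(M_{n}-M_{k})$ on pairwise leaf distances, the same rearrangement to the logarithmic threshold, and the same midpoint-at-a-branch-point argument for the trivial bound $\wp_{(T,\rho_{\ell})}\leq 2$. The only cosmetic difference is that you phrase the conclusion via the closed-interval structure of the exponent set rather than directly bounding an arbitrary exponent $p$, which amounts to the same thing.
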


\begin{proof}
Let $(T, \mathbf{v}_{0})$ be a finite SST that satisfies the hypotheses of the lemma.
Because at least one $d_{j} > 1$ there must exist at least one vertex $\mathbf{z} \in T$
with at least two children $\mathbf{x}, \mathbf{y} \in T$. As children in $T$ are $\rho_{\ell}$-equidistant
from their parent we see that
\begin{align*}
\rho_{\ell}(\mathbf{x}, \mathbf{z}) = \frac{\rho_{\ell}(\mathbf{x}, \mathbf{y})}{2} = \rho_{\ell}(\mathbf{z}, \mathbf{y}).
\end{align*}
The existence of such a metric midpoint in $(T, \rho_{\ell})$ ensures that $\wp_{(T, \rho_{\ell})} \leq 2$.

Now assume that $d_{0}d_{1}\cdots d_{m} > 1$ and consider any non-negative integer $k \leq m$
such that $d_{0}d_{1}\cdots d_{k} > 1$. Then there are $d_{0}d_{1} \cdots d_{k-1}$
vertices at distance $M_{k}$ from $\mathbf{v}_{0}$. For each of the $d_{k}$ children of
such a vertex, choose a leaf which is a descendent of that child (or the child itself if
it is a leaf). This results in a total of $q = d_{0}d_{1} \cdots d_{k} > 1$ distinct leaves
which we label $\mathbf{a}_{1}, \mathbf{a}_{2}, \ldots, \mathbf{a}_{q}$.
Set $\mathbf{b}_{j} = \mathbf{v}_{0}$ for all $j$ such that $1 \leq j \leq q$.
For all $i$ and $j$ we have $\rho_{\ell}(\mathbf{a}_{i}, \mathbf{b}_{j}) = M_{n}$.
Moreover, for all $i \not= j$, we have $\rho_{\ell}(\mathbf{a}_{i}, \mathbf{a}_{j}) \geq 2(M_{n} - M_{k})$
and $\rho_{\ell}(\mathbf{b}_{i}, \mathbf{b}_{j}) = 0$. It follows that any generalized
roundness exponent $p$ of $(T, \rho_{\ell})$ must satisfy
\begin{align}\label{inq:2}
\frac{1}{2}q(q-1)\left( 2(M_{n} - M_{k}) \right)^{p} \leq
\sum\limits_{i < j} \left\{ \rho_{\ell}(\mathbf{a}_{i}, \mathbf{a}_{j})^{p}
+ \rho_{\ell}(\mathbf{b}_{i}, \mathbf{b}_{j})^{p} \right\} \leq
\sum\limits_{i,j} \rho_{\ell}(\mathbf{a}_{i}, \mathbf{b}_{j})^{p} = q^{2}M_{n}^{p}.
\end{align}
By comparing the left and right sides of (\ref{inq:2}) it follows that $p$ must satisfy:
\begin{align*}
p \leq \frac{\ln\left(2+\frac{2}{(d_{0}d_{1}\cdots d_{k}) - 1}\right)}{\ln\left(2-\frac{2M_{k}}{M_{n}}\right)}.
\end{align*}
As $p$ was an arbitrary generalized roundness exponent of $(T, \rho_{\ell})$, we conclude that the lemma holds.
\end{proof}

\begin{theorem}\label{thm:4}
Let $(T, \mathbf{v}_{0})$ be a countable SST with downward degree sequence $(d_{0}, d_{1}, d_{2}, \ldots)$
and downward length sequence $\ell = (l_{0}, l_{1}, l_{2}, \ldots)$. If $d_{i} > 1$ for infinitely many $i$
and $\sum l_{i} = \infty$, then $\wp_{(T, \rho_{\ell})} = 1$.
\end{theorem}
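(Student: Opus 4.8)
The plan is to establish the nontrivial inequality $\wp_{(T, \rho_\ell)} \leq 1$, since every metric tree has generalized roundness at least one. The idea is to produce finite subtrees of $(T, \rho_\ell)$ whose generalized roundness is arbitrarily close to one, and to transfer these bounds to $T$. For each integer $n > 0$, let $T_n$ be the truncation of $T$ to the vertices of level at most $n$. As $T_n$ is a subtree containing the root $\mathbf{v}_0$, every geodesic in $T$ between two vertices of $T_n$ stays inside $T_n$, so $(T_n, \rho_\ell)$ is isometric to a subspace of $(T, \rho_\ell)$. Every generalized roundness exponent of $(T, \rho_\ell)$ is therefore also one for $(T_n, \rho_\ell)$ (immediately from the definition of a simplex, and also by Theorem \ref{thm:1} applied to the isometric inclusion), whence $\wp_{(T, \rho_\ell)} \leq \wp_{(T_n, \rho_\ell)}$.

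Each $T_n$ is a finite SST, so Lemma \ref{sst} applies. Writing $M_j = \sum_{i=0}^{j-1} l_i$ and $q_j = d_0 d_1 \cdots d_j$, the lemma bounds $\wp_{(T_n, \rho_\ell)}$ above by $\ln\!\big(2 + \tfrac{2}{q_k - 1}\big) / \ln\!\big(2 - \tfrac{2 M_k}{M_n}\big)$ for any admissible level $k \le m$ with $q_k > 1$. I would drive this ratio to $1$ in two stages. First, because $d_i > 1$ for infinitely many $i$, the partial products $q_j$ increase without bound, so I fix a level $k_0$ with $q_{k_0}$ as large as desired; this pushes the numerator $\ln(2 + 2/(q_{k_0}-1))$ as close to $\ln 2$ as I wish and pins down the finite quantity $M_{k_0}$. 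Second, with $k_0$ frozen, divergence enters: $\sum l_i = \infty$ forces $M_n \to \infty$, so $M_{k_0}/M_n \to 0$ and the denominator $\ln(2 - 2 M_{k_0}/M_n) \to \ln 2$. Thus, given $\delta > 0$, taking $n$ large secures at once that $2 l_0 < M_n$ and $M_{k_0} < \tfrac12 M_n$ (so $k_0 \le m$ and Lemma \ref{sst} applies), that the truncated degree sequence is non-trivial (since $q_{k_0} > 1$), and that the displayed ratio is below $1 + \delta$. Lemma \ref{sst} with $k = k_0$ then gives $\wp_{(T_n, \rho_\ell)} \le 1 + \delta$, so $\wp_{(T, \rho_\ell)} \le 1 + \delta$; letting $\delta \to 0$ gives $\wp_{(T, \rho_\ell)} = 1$.

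The step needing the most care is coordinating the two limits. Enlarging $q_{k_0}$ forces $k_0$, and hence $M_{k_0}$, to grow, which works against the denominator; the resolution is to fix $k_0$ first---taming the branching factor in the numerator---and only then exploit divergence to make $M_{k_0}/M_n$ negligible in the denominator. The two hypotheses of the theorem thus act in tandem, each neutralizing one factor of the bound from Lemma \ref{sst}, and neither alone would suffice.
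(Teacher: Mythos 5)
Your proof is correct and follows essentially the same route as the paper: truncate to the finite SSTs $T_n$, apply Lemma \ref{sst}, and use the divergence of $\sum l_i$ together with the unbounded partial products $d_0d_1\cdots d_k$ to drive the upper bound to one. The only difference is bookkeeping --- the paper lets $k = k(n)$ grow with $n$ via the choice $M_k \le \ln M_n$, whereas you fix $k_0$ first and then send $n \to \infty$; both coordinate the two limits correctly, and your version makes the role of each hypothesis slightly more transparent.
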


\begin{proof}
For each positive integer $n$ let $(T_{n}, \mathbf{v}_{0})$ denote the finite SST with downward degree
sequence $(d_{0}, d_{1}, \ldots, d_{n-1})$ and downward length sequence $\ell = (l_{0}, l_{1}, \ldots, l_{n-1})$.
For each $k$, $1 \leq k \leq n$, set $M_{k} = \sum_{i=0}^{k-1} l_{i}$.
As $\sum l_{i} = \infty$ we will have $2l_{0} < l_{0} + l_{1} + \cdots + l_{n-1}$ provided $n$ is
sufficiently large. Moreover, for each such integer $n$, we may choose the largest integer $k = k(n)$
such that $M_{k} \leq \ln M_{n}$. As $n \rightarrow \infty$, the quantities $k, M_{k}, M_{n}$ and $d_{0}d_{1} \cdots d_{k}$
all tend to $\infty$. However, by construction, $(2M_{k})/M_{n} \rightarrow 0$ as $n \rightarrow \infty$.
Thus, by Lemma \ref{sst}, $\wp_{(T_{n}, \rho_{\ell})} \rightarrow 1$ as $n \rightarrow \infty$,
and so we conclude that $\wp_{(T, \rho_{\ell})} = 1$.
\end{proof}

\begin{theorem}\label{thm:5}
Let $f : \mathbb{N} \rightarrow (0, \infty)$ be an additively sub-exponential function.
Let $(T, \mathbf{v}_{0})$ be a countable SST with downward degree sequence $(d_{k})$.
Let $\ell$ denote the downward length sequence $(f(k))$.
If $d_{k} > 1$ for each integer $k \geq 0$, then $\wp_{(T, \rho_{\ell})} = 1$.
\end{theorem}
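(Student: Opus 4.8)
The plan is to follow the template of Theorem \ref{thm:3}, replacing the ambient comb $C(f)$ by the tree $(T, \rho_{\ell})$. Since every metric tree has generalized roundness at least one, it suffices to prove the reverse inequality $\wp_{(T, \rho_{\ell})} \leq 1$, and for this I would invoke Lemma \ref{lem:3}: it is enough to show that the combinatorial $m$-comb $C_{m}(1)$ is locally represented in $(T, \rho_{\ell})$ for every integer $m > 0$.

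Fix $m > 0$ and $\vep > 0$. Because $f$ is additively sub-exponential, I would first choose an integer $n_{0} > 0$ so large that $1 - \vep \leq f(n_{0} + k)/f(n_{0}) \leq 1 + \vep$ for every $k \in [m]$; this is exactly the selection made at the start of the proof of Theorem \ref{thm:3}. The second ingredient is geometric: I would locate a graph-theoretic copy of $C_{m}$ inside $T$ all of whose edges issue from the levels $n_{0}, n_{0}+1, \ldots, n_{0}+m$. Starting from any level-$n_{0}$ vertex $\mathbf{s}_{0}$, pick a descending path $\mathbf{s}_{0}, \mathbf{s}_{1}, \ldots, \mathbf{s}_{m}$ with $\mathbf{s}_{j+1}$ a child of $\mathbf{s}_{j}$, and at each spine vertex $\mathbf{s}_{j}$ select an additional child $\mathbf{t}_{j} \neq \mathbf{s}_{j+1}$. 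The resulting sub-structure, a spine of $m+1$ vertices carrying one tooth apiece, is isomorphic as an unweighted graph to $C_{m}$, and I would let $\phi : C_{m}(1) \rightarrow T$ denote this natural identification. This is the step where the hypothesis $d_{k} > 1$ for every $k \geq 0$ is indispensable: it guarantees that each of the consecutive levels $n_{0}, \ldots, n_{0}+m$ branches, so a tooth can be grown at every spine vertex no matter how large $n_{0}$ must be taken.

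By construction, each spine edge $\{\mathbf{s}_{j}, \mathbf{s}_{j+1}\}$ and each tooth edge $\{\mathbf{s}_{j}, \mathbf{t}_{j}\}$ descends from a level-$(n_{0}+j)$ vertex, hence has $\rho_{\ell}$-length $l_{n_{0}+j} = f(n_{0}+j)$, whereas the corresponding edge of $C_{m}(1)$ has $\delta$-length one. Consequently, for every edge $\{\mathbf{s}, \mathbf{t}\}$ of $C_{m}$ the ratio $\rho_{\ell}(\phi(\mathbf{s}), \phi(\mathbf{t}))/\delta(\mathbf{s}, \mathbf{t})$ equals $f(n_{0}+k)$ for some $k \in [m]$ and so lies in the band $[\,(1-\vep)f(n_{0}),\, (1+\vep)f(n_{0})\,]$. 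Theorem \ref{thm:2}, applied with the constant $n = f(n_{0})$, then certifies that $\phi$ is a $(1+\vep)$-scale isomorphism; here I would note, as in Theorem \ref{thm:3}, that the metric $\phi$ transports is the genuine path metric inherited from $(T, \rho_{\ell})$, since geodesics in a tree are unique and the chosen sub-structure is connected. As $C_{m}(1)$ is finite and $\vep > 0$ was arbitrary, $C_{m}(1)$ is locally represented in $(T, \rho_{\ell})$; as $m$ was also arbitrary, Lemma \ref{lem:3} yields $\wp_{(T, \rho_{\ell})} = 1$.

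The main obstacle, and the reason this argument deserves to be isolated from Theorem \ref{thm:4}, is that $f$ sub-exponential does not force $\sum f(k) = \infty$: the length sum may converge, in which case $(T, \rho_{\ell})$ has finite diameter and the truncation-plus-Lemma \ref{sst} strategy of Theorem \ref{thm:4} collapses, since one can no longer drive $M_{k}/M_{n} \to 0$ while $k \to \infty$. Routing the proof through a slowly-varying comb rather than through the star-like configuration of Lemma \ref{sst} sidesteps any divergence requirement, at the cost of demanding branching at every level rather than merely infinitely often. The only genuinely delicate points are the simultaneous control of all the relevant edge lengths by a single sub-exponential estimate and the availability of consecutive branching levels for the chosen $n_{0}$; both are furnished directly by the hypotheses.
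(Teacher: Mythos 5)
Your proof is correct and follows essentially the same route as the paper's: the paper simply observes that the hypothesis $d_{k}>1$ for all $k$ lets one grow a comb along a descending spine, so that $C(f)$ embeds isometrically into $(T,\rho_{\ell})$, and then quotes Theorem \ref{thm:3} to get $\wp_{(T,\rho_{\ell})}\leq\wp_{C(f)}=1$. You have merely inlined the proof of Theorem \ref{thm:3}, carrying out the $(1+\vep)$-scale isomorphism from $C_{m}(1)$ directly into the deep levels of $T$ rather than factoring through the intermediate isometric copy of $C(f)$; the construction, the use of sub-exponentiality, and the appeal to Lemma \ref{lem:3} and Theorem \ref{thm:2} are all the same.
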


\begin{proof}
The condition $d_{k} > 1$ for each integer $k \geq 0$ ensures that the infinite comb $C(f)$ is isometric to a metric
subspace of $(T, \rho_{\ell})$. Thus $\wp_{(T, \rho_{\ell})} \leq \wp_{C(f)}$. Moreover, $\wp_{C(f)} = 1$ by
Theorem \ref{thm:3} and $\wp_{(T, \rho_{\ell})} \geq 1$, thereby forcing $\wp_{(T, \rho_{\ell})} = 1$.
\end{proof}

Theorem \ref{thm:5} provides examples of convergent SSTs with generalized roundness one.
For instance, we may simply set $d_{k} = 2$ and $f(k) = (k+1)^{-2}$ for all $k \geq 0$ to
obtain a countable SST that is convergent and has generalized roundness one. In particular,
such SSTs have finite diameter.

\section{Embedding properties of metric trees of generalized roundness one}\label{sec:5}

We conclude this paper with some comments on the special Euclidean embedding properties of metric
trees of generalized roundness one that set them apart from all other metric trees. As noted at the
outset of this paper, the notions of generalized roundness and negative type are equivalent. In
order to make this statement more precise we recall the following definition, the roots of which
can be traced back to an 1841 paper of Cayley \cite{Cay}.

\begin{definition}\label{types} Let $p \geq 0$ and let $(X,d)$ be a metric space. Then:
\begin{enumerate}
\item[(1)] $(X,d)$ has $p$-{\textit{negative type}} if and only if for all integers $k \geq 2$,
all finite subsets $\{x_{1}, \ldots , x_{k} \} \subseteq X$, and all choices of real numbers
$\eta_{1}, \ldots, \eta_{k}$ with $\eta_{1} + \cdots + \eta_{k} = 0$, we have
\begin{eqnarray}\label{TWO}
\sum\limits_{1 \leq i,j \leq k} d(x_{i},x_{j})^{p} \eta_{i} \eta_{j}  & \leq & 0.
\end{eqnarray}

\item[(2)] $(X,d)$ has \textit{strict} $p$-{\textit{negative type}} if and only if it has $p$-negative type
and the associated inequalities (\ref{TWO}) are all strict except in the trivial case
$(\eta_{1}, \ldots, \eta_{k})$ $= (0, \ldots, 0)$.
\end{enumerate}
\end{definition}

Lennard \textit{et al}.\ \cite{Ltw} proved that for all $p \geq 0$, a metric space $(X,d)$ has $p$-negative type
if and only if $p$ is a generalized roundness exponent of $(X,d)$. The significance of this result is that
builds a bridge between Enflo's \cite{En1} notion of generalized roundness and classical isometric embedding theory.
These connections, in conjunction with contemporary results on strict negative type, enable the following theorem.

\begin{theorem}\label{thm:6}
If $(T, d)$ is a metric tree of generalized roundness one, then:
\begin{enumerate}
\item[(1)] the metric transform $(T, \sqrt{d})$ is isometric to an affinely independent subset of $\ell_{2}$, and

\item[(2)] the metric transform $(T, \sqrt{d^{p}})$ does not embed isometrically into $\ell_{2}$ for any $p$, $1 < p \leq 2$.
\end{enumerate}
Moreover, the only metric trees that satisfy condition (2) are those of generalized roundness one.
\end{theorem}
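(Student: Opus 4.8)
The plan is to route everything through the dictionary between generalized roundness exponents and negative type recorded above: by Lennard \textit{et al}.\ \cite{Ltw}, $p$ is a generalized roundness exponent of $(T,d)$ precisely when $(T,d)$ has $p$-negative type, and by Schoenberg \cite{Sch} a metric space has $p$-negative type (for $0 < p \leq 2$) if and only if the metric transform obtained by raising all distances to the power $p/2$ embeds isometrically into a Hilbert space. Since $\sqrt{d}=d^{1/2}$ and $\sqrt{d^{p}}=d^{p/2}$, part (1) amounts to $(T,d)$ having $1$-negative type together with the extra rigidity that the resulting Euclidean embedding is affinely independent, while part (2) amounts to $(T,d)$ failing $p$-negative type for every $p\in(1,2]$. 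Both embeddings may be taken into $\ell_{2}$ because every metric tree is at most countable.

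For part (1), I would first note that the embedding itself exists for \emph{any} metric tree: all metric trees have generalized roundness at least one, hence $1$-negative type, so Schoenberg supplies an isometry $\psi:(T,\sqrt{d})\to\ell_{2}$ with $\|\psi(x)-\psi(y)\|_{\ell_{2}}^{2}=d(x,y)$. Affine independence of the image is exactly \emph{strict} $1$-negative type, via the identity, valid for any finite $x_{1},\dots,x_{k}$ and any scalars with $\eta_{1}+\cdots+\eta_{k}=0$,
\[
\sum_{1\le i,j\le k} d(x_{i},x_{j})\,\eta_{i}\eta_{j}
= -2\,\Bigl\|\sum_{i=1}^{k}\eta_{i}\,\psi(x_{i})\Bigr\|_{\ell_{2}}^{2},
\]
so the left side is strictly negative for every nontrivial $\eta$ iff no nontrivial affine combination of the $\psi(x_{i})$ vanishes. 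It then remains to establish strict $1$-negative type, for which I would exploit that this is a purely finite, local condition. Every finite subset $S\subseteq T$ lies in the finite subtree it spans, whose induced metric coincides with its own path metric (geodesics in a tree are unique); Hjorth \textit{et al}.\ \cite{Hj1} show that this finite metric tree has strict $1$-negative type, and strict $1$-negative type passes to subsets by extending any $\eta$ on $S$ by zero. As $S$ was arbitrary, $(T,d)$ has strict $1$-negative type, which with the identity above yields part (1).

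For part (2), I would invoke the hypothesis $\wp_{(T,d)}=1$. By \cite{Ltw} the generalized roundness exponents are exactly the $p$ for which $(T,d)$ has $p$-negative type, so, the exponent set being $[0,1]$, the space has $p$-negative type only for $0\le p\le 1$ and fails it for every $p\in(1,2]$; Schoenberg \cite{Sch} then shows $(T,\sqrt{d^{p}})$ does not embed isometrically into $\ell_{2}$ for any such $p$. For the concluding converse, suppose a metric tree $(T,d)$ satisfies condition (2). Since every metric tree has $\wp_{(T,d)}\ge 1$, it suffices to exclude $\wp_{(T,d)}>1$: if $\wp_{(T,d)}>1$, pick any $p$ with $1<p\le\min\{2,\wp_{(T,d)}\}$, so that $p$ is a generalized roundness exponent, whence $(T,d)$ has $p$-negative type and Schoenberg \cite{Sch} produces an isometric embedding of $(T,\sqrt{d^{p}})$ into $\ell_{2}$, contradicting (2). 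Therefore $\wp_{(T,d)}=1$.

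The only genuinely delicate point is the affine independence in part (1), that is, upgrading the mere Euclidean embedding of $(T,\sqrt{d})$ to strict $1$-negative type; everything else is bookkeeping with the negative-type/Schoenberg correspondence. The reduction of strictness to finite subtrees (and thence to \cite{Hj1}) is what makes this step manageable, and it also clarifies why the hypothesis of generalized roundness one is used only for part (2) and its converse rather than for part (1).
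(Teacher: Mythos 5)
Your proposal is correct and follows essentially the same route as the paper: strict $1$-negative type is obtained by reducing to finite subtrees and invoking Hjorth \textit{et al}.\ \cite{Hj1}, while part (2) and the concluding converse are handled through the Lennard--Tonge--Weston equivalence \cite{Ltw} between generalized roundness exponents and $p$-negative type. The only cosmetic difference is that you justify the affine-independence characterization directly from Schoenberg's embedding theorem \cite{Sch} together with the identity $\sum_{i,j} d(x_{i},x_{j})\eta_{i}\eta_{j} = -2\bigl\|\sum_{i}\eta_{i}\psi(x_{i})\bigr\|_{\ell_{2}}^{2}$, whereas the paper cites this equivalence as \cite[Theorem 5.6]{Ke2}.
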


\begin{proof}
The vertex set of $(T, d)$ is countable because it is a metric tree of generalized roundness one.
Our definitions imply that each finite subset of $T$ is contained in a finite subtree of $T$.
Hjorth \textit{et al}.\ \cite{Hj1} have shown that all finite metric trees have strict $1$-negative
type. Hence each finite metric subspace of $(T,d)$ has strict $1$-negative type. This ensures
that $(T,d)$ has strict $1$-negative type. Equivalently, the metric transform $(T, \sqrt{d})$
has strict $2$-negative type. Therefore $(T, \sqrt{d})$ is isometric to an affinely independent subset
of $\ell_{2}$ by Kelleher \textit{et al}. \cite[Theorem 5.6]{Ke2}. This establishes condition (1).

If the metric transform $(T, \sqrt{d^{p}})$ were to embed isometrically into $\ell_{2}$ for some $p \in (1, 2]$,
this would imply that $(T, d)$ has $p$-negative type. But by Lennard \textit{et al}.\ \cite{Ltw}, this
would mean that $p$ is a generalized roundness exponent of $(T, d)$, thereby contradicting
our assumption that $\wp_{(T, d)} = 1$. This establishes condition (1).

On the other hand, if a metric tree $(Z,d)$ is not of generalized roundness one, then it must be the
case that $\wp_{(Z,d)} > 1$ (because $1$ is a generalized roundness exponent of all metric trees).
By Lennard \textit{et al}.\ \cite{Ltw}, this implies that $(Z,d)$ has $p$-negative type for some $p \in (1, 2]$.
Consequently, the metric transform $(T, \sqrt{d^{p}})$ embeds isometrically into $\ell_{2}$ by
Kelleher \textit{et al}. \cite[Theorem 5.6]{Ke2}. We conclude that the only metric trees that satisfy
condition (2) are those of generalized roundness one.
\end{proof}

The proof of Theorem \ref{thm:6} shows that all metric trees have strict $1$-negative type. Therefore
every metric tree satisfies Theorem \ref{thm:6} (1) by the result of Kelleher \textit{et al}. \cite{Ke2}.

\section*{Acknowledgements}

The research in this paper was initiated at the 2011 Cornell University \textit{Summer Mathematics Institute}
(NSF grant DMS-0739338) and completed at the University of South Africa (Unisa).
The second named author was partially supported by the National Science Foundation
\textit{Graduate Research Fellowship Program} (NSF grant DGE-1144082). We are indebted to the
\textit{Visiting Researcher Support Programme} at Unisa, the US National Science Foundation, the
Department of Mathematics and the Center for Applied Mathematics at Cornell University for their support.

\bibliographystyle{amsalpha}

\end{document}